\newtheorem{thm}{Theorem}[section]
\newtheorem{lem}[thm]{Lemma}
\newtheorem{cor}[thm]{Corollary}
\newtheorem{assumption}[thm]{Assumption}
\theoremstyle{definition}
\newtheorem{defn}[thm]{Definition}
\theoremstyle{remark}
\newtheorem{rmk}[thm]{Remark}
\numberwithin{equation}{section}
\def\grad{\nabla}
\renewcommand{\H}[1][1]{W^{#1,2}(\Omega)}
\renewcommand{\to}{\mathrel{\rightarrow}}
\newcommand{\R}{\mathbb{R}}
\def\g{\gamma}
\def\t{\tau}
\def\G{\Gamma}
\def\O{\Omega}
\newcommand{\norm}[1]{\left\|#1\right\|}
\newcommand{\abs}[1]{\left|#1\right|}
\def\<{\langle}
\def\>{\rangle}
\def\H{H^1_{\G_0}}
\begin{document}

\author[B. Feng]{Baowei Feng}
\address{Department of Mathematics,
Southwestern University of Finance and Economics,
Chengdu 611130, P. R. China }
 \email{bwfeng@swufe.edu.cn}

\author[Y. Guo]{Yanqiu Guo}
\address{Department of Mathematics and Statistics, Florida International University, Miami FL 33199, USA }
 \email{yanguo@fiu.edu}

\author[M. A. Rammaha]{Mohammad A. Rammaha}
\address{Department of Mathematics, University of Nebraska--Lincoln, Lincoln, NE  68588-0130, USA} \email{mrammaha1@unl.edu}

\title[Blow-up of a structural acoustics model]{Blow-up of a structural acoustics model}

\date{January 1, 2023}
\subjclass[2020]{35L70}
\keywords{structural acoustics; wave-plate models; finite time blow-up; local and global existence}

\maketitle

\begin{abstract} This article studies the finite time blow-up of weak solutions to a structural acoustics model consisting of a semilinear wave equation defined on a bounded domain $\Omega\subset\mathbb{R}^3$ which is strongly coupled with a  Berger plate equation acting on 
the elastic wall, namely, a flat portion of the boundary. The system is influenced by several competing forces, including boundary and interior source and damping terms. We stress that the power-type source term acting on  the wave equation is  allowed to  have a \emph{supercritical} exponent, in the sense that its associated Nemytskii operators is not locally Lipschitz from $H^1$ into $L^2$. 
In this paper, we prove the blow-up results for weak solutions when the source terms are stronger than damping terms, by considering two scenarios of the initial data: (i) the initial total energy is negative; (ii) the initial total energy is positive but small, while the initial quadratic energy is sufficiently large. The most significant challenge in this work arises from the coupling of the wave  and plate equations on the elastic wall.
\end{abstract}

\maketitle

\section{Introduction}\label{S1}

We study the finite time blow-up for a structural acoustics model influenced with nonlinear forces. Precisely, we consider the following coupled system of nonlinear PDEs:
\begin{align}\label{PDE}
\begin{cases}
u_{tt}-\Delta u+u +g_1(u_t)=f(u) &\text{ in } \O \times (0,T),\\[1mm]
w_{tt}+\Delta^2w+g_2(w_t)+u_t|_{\G}=h(w)&\text{ in }\G\times(0,T),\\[1mm]
u=0&\text{ on }\G_0\times(0,T),\\[1mm]
\partial_\nu u=w_t&\text{ on }\G\times(0,T),\\[1mm]
w=\partial_{\nu_\G}w=0&\text{ on }\partial\G\times(0,T),\\[1mm]
(u(0),u_t(0))=(u_0,u_1),\hspace{5mm}(w(0),w_t(0))=(w_0,w_1),
\end{cases}
\end{align}
where the  initial data reside in the finite energy space, i.e., 
$$(u_0, u_1)\in H^1_{\G_0}(\O) \times L^2(\O) \, \text{ and }(w_0, w_1)\in H^2_0(\G)\times L^2(\G).$$
The space $H^1_{\G_0}(\O)$ defined in (\ref{def-H}) consists of all $H^1$ functions that vanish on $\Gamma_0$.

Here,  $\O\subset\R^3$ is a bounded, open, connected domain with smooth boundary 
$\partial\O=\overline{\G_0\cup\G}$, where $\G_0$ and $\G$ are two disjoint, open, connected sets of positive Lebesgue  measure.  Moreover, $\G$ is a \emph{flat} portion of the boundary of $\O$ and is referred to as the elastic wall. The part $\G_0$ of the boundary $\partial\O$ describes a rigid  wall, while  the coupling takes place on the flexible wall  $\G$. 

The nonlinearities $f$ and $h$ are source terms acting on the wave and plate equations respectively, where the source term $f(u)$ is of a supercritical order, in the sense that its associated Nemytskii operator is not locally Lipschitz from $H^1_{\G_0}(\O) $ into $L^2(\Omega)$. In the case of the 3D domain $\Omega$, the supercritical order means that the exponent of the power-like function $f$ is larger than 3. We stress that both source terms $f(u)$ and $h(w)$ are allowed to have ``bad" signs  which  may cause instability (blow up) in  finite time.  In addition, the system is influenced by two other forces, namely $g_1(u_t)$ and $g_2(w_t)$ representing frictional damping terms acting on the wave and plate equations, respectively. The vectors $\nu$ and $\nu_\G$ denote the outer normals to $\G$ and $\partial\G$, respectively.

Models such as (\ref{PDE}) arise in the context of modeling gas pressure in an 
acoustic chamber which is surrounded by a combination of rigid and 
flexible walls. The pressure in the chamber is described by the solution 
to a wave equation, while vibrations of the flexible wall are described 
by the solution to a coupled a Berger plate equation.

PDE models describing structural acoustic interaction have rich history. These models are well known in both the physical and mathematical literature and go back to the canonical models considered in \cite{Beale76,Howe1998}. In the context of stabilization and controllability of structural acoustics models there is a very large body of literature. We refer the reader to the monograph by Lasiecka \cite{Las2002} which provides a  comprehensive overview and quotes many works on these topics. Other related contributions include \cite{Avalos2,Avalos1,Avalos3,Avalos4,Cagnol1,MG1,MG3,LAS1999}. However, the finite time blow-up for structural acoustics models under the influence of nonlinear forces has not been studied in the literature, and so we address this issue in this paper.

Our goal is to understand the source-damping interactions in the structural acoustics model (\ref{PDE}), 
and how these interactions affect the behaviors of weak solutions. 
The local well-posedness of weak solutions to system (\ref{PDE}) was proved by 
Becklin and Rammaha in \cite{Becklin-Rammaha2}, in which they also showed the global existence if the damping are more dominant than the source terms. In our paper \cite{Feng-Guo-Rammaha1}, by using the potential well theory, we proved the global existence of weak solutions and estimated the energy decay rates, provided the initial data come from the stable part of the potential well. 
In the present manuscript, we shall demonstrate the blow-up phenomena of weak solutions when the source terms are stronger than damping terms, and we consider two cases of the initial data: (i) the initial total energy is negative, which means that the initial potential energy due to the nonlinear forces is sufficiently large; (ii) the initial total energy is positive but small enough, while the initial quadratic energy is large. In this case, the initial data come from the unstable part of the potential well. To understand these blow-up phenomena intuitively, one can imagine that a nonlinear force continuously acts on the elastic wall to increase its vibration and simultaneously another nonlinear force acts on the gas inside the acoustic chamber to increase its pressure, and these forces surpass the damping effects, then the system collapses at some finite time.

The difficulty for proving the blow-up of weak solutions to system (\ref{PDE}) comes from the coupling of the wave equation and the plate equation on the elastic wall, i.e., the flat portion of the boundary. We notice that the coupling of these two evolution equations in (\ref{PDE}) are through the term $u_t|_{\Gamma}$ where 
$\Gamma$ is the elastic wall. Since we consider weak solutions of the wave equation, $u_t$ belongs to $L^2(\Omega)$, while a generic $L^2$ function may not have a well-defined trace on the boundary of $\Omega$. In system (\ref{PDE}), the term  $u_t|_{\Gamma}$ is defined in a weak sense via the plate equation. But we do not have an appropriate estimate for the $L^2(\Gamma)$ norm of $u_t|_{\Gamma}$. 
Therefore, throughout the proof of our blow-up results, we strive to prevent directly estimating $u_t|_{\Gamma}$, and the idea is to convert $u_t|_{\Gamma}$  to a different term by taking advantage of the structure of the equation. Note, the basic strategy for proving the blow-up is to show the function $Y(t)=G^{1-a}(t)+\varepsilon N'(t)$ approaches infinity in finite time. The function $Y(t)$ was used in \cite{GT}, where $G(t)$ was the negative of the total energy. But, in \cite{GT} and many other related works in the literature, $N(t)$ is usually defined as the $L^2$ norm of the unknown function. However, in our argument, we use a trick by including an additional term $\int^t_0\int_\Gamma\gamma u(\tau)\cdot
w(\tau)d\Gamma d\tau$ in $N(t)$ (see (\ref{def-N})).  This extra term helps us to convert the troublesome term $u_t|_{\Gamma}$ in our estimate to a well-behaved term $w_t$, where the $L^2(\Gamma)$ norm of $w_t$ is part of the energy.

We must point out that in the original linear structural acoustics model, $u$ satisfies the wave equation $u_{tt}-\Delta u=0$. However, in order to resolve some technical difficulty occurred during our proof for the blow-up results, we add a term $u$ to the linear part, and the linear operator in our system (\ref{PDE}) becomes $u_{tt}-\Delta u+u$, which usually appears in a Klein-Gordon equation.
The extra term $u$ in the linear operator is useful when we estimate the $L^2(\Gamma)$ norm of the trace of $u$ in (\ref{6-14-2}) since it allows us to obtain precise coefficients on the right-hand side of the inequality, which is critical for our argument.

Source-damping interactions have important applications. On one hand, in control theory, one may use damping terms to stabilize the system. On the other hand, one can create instability by strengthening the source terms. The interesting source-damping interactions in wave equations have been illustrated by a pioneering work by Georgiev and Todorova \cite{GT}. 
Bociu and Lasiecka wrote a series of papers \cite{BL3, BL2, BL1} to study wave equations with \emph{supercritical} source and damping terms acting in the interior and on the boundary of the domain. Also, Guo \cite{Guo} proved the global well-posedness of a 3D wave equation with a source term of an arbitrarily large exponent, as long as the frictional damping term is strong enough to suppress the growth of solutions due to the source term. One may also refer to papers \cite{GR1, GR2, GR} for source-damping interactions in coupled wave equations.

The content of the paper is organized as follows. In Section \ref{sec-2}, we state well-posedness results from \cite{Becklin-Rammaha2} by Becklin and Rammaha, and our previous results on global existence and energy decay of weak solutions in \cite{Feng-Guo-Rammaha1}. Moreover, we state  main results of this manuscript, namely, finite-time blow-up of weak solutions. In Section \ref{blowup1}, we prove the blow-up of weak solutions by assuming the initial total energy is negative. In Section \ref{blowup2}, we show the finite-time blow-up by supposing the initial total energy is positive.

\vspace{0.1 in}

\section{Preliminaries and main results} \label{sec-2}

\subsection{Notation}

\noindent{}Throughout the paper the following notational conventions for $L^p$ space norms and standard inner products will be used:
\begin{align*}
&||u||_p=||u||_{L^p(\O)}, &&(u,v)_\O = (u,v)_{L^2(\O)},\\
&|u|_p=||u||_{L^p(\G)},&&(u,v)_\G = (u,v)_{L^2(\G)}.
\end{align*}
We also use the notation  $\g u$ to denote the \emph{trace} of $u$ on $\G$. As is customary, $C$  always denotes a generic positive constant which may change from line to line.

\noindent{}Further, we put 
\begin{align} \label{def-H}
\H(\O):=\{u\in H^1(\O):u|_{\G_0}=0\},
\end{align}
and
$\norm{u}_{\H(\O)}:= (\norm{\grad u}^2_2+\|u\|^2_2)^{\frac{1}{2}}$.
It is well-known that the standard norm
$\norm{u}_{\H(\O)}$ is equivalent to $\norm{\grad u}_2$. 
For a similar reason, we put $\norm{w}_{H_0^2(\G)}= \abs{\Delta w}_2$. 

In the proof, the following Sobolev imbeddings will be used:
$H_{\Gamma_0}^1(\O)\hookrightarrow L^{6}(\O)$ and $H^1(\Gamma)\hookrightarrow L^q(\G)$ for any $1\leq q<\infty$.

\subsection{Well-posedness of weak solutions} Throughout the paper, we study  (\ref{PDE}) under the following assumptions.

\begin{assumption}\label{ass} 
\hfill
\begin{description}
\item[Damping] $g_1, \, g_2:\R \rightarrow \R$ are continuous and monotone increasing functions with $g_1(0)=g_2(0)=0$.  In addition, the following growth conditions at infinity hold: there exist positive constants $\alpha$ and $\beta$ such that, for $|s|\geq1$,
\begin{align*}
&\alpha|s|^{m+1}\leq g_1(s)s\leq\beta|s|^{m+1},\text{ with }m\geq1,\\
&\alpha|s|^{r+1}\leq g_2(s)s\leq\beta|s|^{r+1},\text{ with }r\geq1.
\end{align*}
\item[Source terms] $f$ and $h$ are functions in $C^1(\R)$ such that
\begin{align*}&|f'(s)|\leq C(|s|^{p-1}+1),\text{ with }1\leq p< 6,\\
&|h'(s)|\leq C(|s|^{q-1}+1),\text{ with }1\leq q<\infty.
\end{align*}
\item[Parameters] \,\, $p\frac{m+1}{m}<6$.
\end{description}
\end{assumption}

The following assumption will be needed for establishing an uniqueness result.
\begin{assumption}\label{ass-2}
For $p>3$, we assume that  $f\in C^2(\R)$ with $|f''(u)|\leq C(|u|^{p-2}+1)$ for all $u\in\R$.
\end{assumption}

\noindent{} We begin by introducing the definition of a suitable weak solution for \eqref{PDE}.
\begin{defn}\label{def:weaksln}
A pair of functions $(u,w)$ is said to be a weak solution of \eqref{PDE} on the interval $[0,T]$ provided:
	\begin{enumerate}[(i)]
		\setlength{\itemsep}{5pt}
		\item\label{def-u} $u\in C([0,T];H^1_{\G_0}(\O))$, $u_t\in C([0,T];L^2(\O))\cap L^{m+1}(\O\times(0,T))$,
		\item\label{def-v} $w\in C([0,T];H^2_0(\G))$, $w_t\in C([0,T];L^2(\G))\cap L^{r+1}(\G\times(0,T))$,
		\item\label{def-uic} $(u(0),u_t(0))=(u_0,u_1) \in H^1_{\G_0}(\O)\times L^2(\O)$,
		\item\label{def-wic} $(w(0),w_t(0))=(w_0,w_1) \in H^2_0(\G)\times L^2(\G)$,
		\item\label{def-ws} The functions $u$ and $w$ satisfy the following variational  identities 
		for all $t\in[0,T]$: 
\begin{align}\label{wkslnwave}
(u_{t}(t),  \phi(t))_\O & - (u_1,\phi(0))_\O-\int_0^t ( u_t(\tau), \phi_t(\tau) )_\O d\tau
+\int_0^t (\nabla u(\tau), \nabla\phi(\tau) )_\O d\tau \notag \\
&+\int^t_0(u(\tau),\phi(\tau))_\Omega d\tau-\int_0^t  (w_t(\tau), \g\phi(\tau) )_\G d\tau+\int_0^t\int_\Omega g_1(u_t(\tau))\phi(\tau) dxd\tau  \notag\\
&=\int_0^t\int_\Omega f(u(\tau))\phi(\tau) dxd\tau,
\end{align}
\begin{align}\label{wkslnplt}
(w_t(t) & + \g u(t),\psi(t) )_\G  -(w_1 +\g u_0 ,\psi(0))_\G -\int_0^t (w_t(\tau), \psi_t(\tau) )_\G d\tau \notag \\
& -\int_0^t (\g u(\tau), \psi_t(\tau) )_\G d\tau+\int_0^t (\Delta w(\tau), \Delta\psi(\tau) )_\G d\tau \notag \\
&+ \int_0^t\int_{\G}g_2(w_t(\tau))\psi(\tau) d\G d\tau=\int_0^t\int_{\G}h(w(\tau))\psi(\tau) d\G d\tau,
\end{align}
for all test functions $\phi$ and $\psi$ satisfying:
$\phi\in C([0,T];H^1_{\G_0}(\O))  \cap L^{m+1}(\O\times(0,T))$, $\psi\in C\left([0,T];H^2_0(\G)\right)$  with 
$\phi_t\in L^1(0,T;L^2(\O))$, and $\psi_t\in L^{1}(0,T;L^2(\G))$.
	\end{enumerate}
\end{defn}

Our work in this paper is based on the existence results which were established in \cite{Becklin-Rammaha2} by Becklin and Rammaha. For the reader's convenience, we first summarize the important results in \cite{Becklin-Rammaha2}. 

\begin{thm} [{\bf Local and  global weak solutions \cite{Becklin-Rammaha2}}]  \label{t:1}
	Under the validity of Assumption \ref{ass}, then there exists a local weak soluition $(u,w)$ to \eqref{PDE} defined on $[0,T_0]$ for some $T_0>0$ depending on the initial energy $E(0)$, where
the quadratic energy $E(t)$ is given by
\begin{align} \label{def-qua}
E(t):=\frac{1}{2}\left(\|u_t(t)\|_2^2+\|\nabla u(t)\|_2^2 +\|u(t)\|^2_2 +|w_t(t)|_2^2+|\Delta w(t)|_2^2\right).
\end{align}
\begin{itemize}
\item ~~$(u,w)$ satisfies the following energy identity for all $t\in [0,T_0]$:
\begin{align}\label{energy}
	E(t)&+\int_0^t\int_\O g_1(u_t)u_tdxd\t+\int_0^t\int_\G g_2(w_t)w_td\G d\t\notag\\
	&=E(0)+\int_0^t \int_\O f(u)u_tdxd\t+\int_0^t\int_\G h(w)w_td\G d\t.
\end{align}
\item In addition to Assumption  \ref{ass}, if we assume that $u_0\in L^{p+1}(\O)$, $p\leq m$ and $q\leq r$, then the said solution $(u,w)$  is a global weak solution and $T_0$ can be taken arbitrarily large.

\item If Assumptions \ref{ass} and \ref{ass-2} are valid, and if we further assume that $u_0\in L^{\frac{3(p-1)}{2}}(\O)$, then weak solutions of \eqref{PDE} are unique.

\item If Assumption \ref{ass} is valid, and if we additionally assume that $u_0\in L^{3(p-1)}(\O)$ and $m \geq 3p-4$ when  $p>3$, then weak solutions of \eqref{PDE} are unique.

\end{itemize}
\end{thm}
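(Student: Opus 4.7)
The plan is to combine a Galerkin (or operator-splitting) approximation with a truncation of the supercritical source term. The main difficulty is that when $p>3$, the Nemytskii map $u\mapsto f(u)$ fails to be locally Lipschitz from $H^1_{\Gamma_0}(\Omega)$ into $L^2(\Omega)$, so the system is not amenable to a straightforward fixed-point or semigroup argument with a locally Lipschitz forcing. To handle this, I would replace $f$ by a family of globally Lipschitz truncations $f_K$ (and similarly $h_K$) which agree with $f,h$ on $\{|u|\leq K\}$, and recast the resulting problem as an abstract evolution equation $\partial_t U + \mathcal{A}U + \mathcal{G}(U_t) = \mathcal{F}_K(U)$ on the product Hilbert space $H^1_{\Gamma_0}(\Omega)\times L^2(\Omega)\times H^2_0(\Gamma)\times L^2(\Gamma)$. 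The linear operator $\mathcal{A}$ built from $-\Delta+I$ on $\Omega$, $\Delta^2$ on $\Gamma$, together with the coupling $\partial_\nu u = w_t,\ u_t|_\Gamma$, is skew-adjoint (in the correct energy inner product that bundles $w_t+\gamma u$), and the damping operator $\mathcal{G}$ is maximal monotone. Hence existence of a local strong solution of the truncated problem follows from Kato–Barbu nonlinear semigroup theory for monotone perturbations.

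The crucial step is obtaining $K$-independent a priori estimates. Testing the wave equation with $u_t$ and the plate equation with $w_t$ (and using that the cross terms $-(w_t,\gamma u_t)_\Gamma$ cancel with $(u_t|_\Gamma,w_t)_\Gamma$ from the Neumann coupling) yields the energy identity (\ref{energy}) for the truncated system. The source contribution is then absorbed into the damping through the sharp Young inequality
\begin{equation*}
\left|\int_\Omega f_K(u)u_t\, dx\right|\leq \varepsilon\int_\Omega |u_t|^{m+1}dx + C_\varepsilon\int_\Omega |u|^{p(m+1)/m}dx + \text{l.o.t.},
\end{equation*}
and the structural condition $p\tfrac{m+1}{m}<6$ together with the Sobolev embedding $H^1_{\Gamma_0}(\Omega)\hookrightarrow L^6(\Omega)$ controls the last term by $\|\nabla u\|_2^{p(m+1)/m}$. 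An analogous estimate handles $h_K(w)w_t$ on $\Gamma$, which is easier since $H^1(\Gamma)\hookrightarrow L^q(\Gamma)$ for all $q<\infty$. These bounds yield a local-in-time uniform estimate on $E(t)$, which allows removal of the truncation by Aubin–Lions compactness (strong convergence of $u$ in $L^{p+1}(\Omega\times(0,T_0))$ suffices to pass to the limit in $f_K(u)$), producing a weak solution in the sense of Definition 2.3 and, by density, the energy identity (\ref{energy}).

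For global existence under $p\leq m$, $q\leq r$, one bounds $\int_0^t\int_\Omega f(u)u_t$ by $\frac{1}{2}\|u_t\|_{m+1}^{m+1}+C\|u\|_{p+1}^{p+1}$, with the first term absorbed into damping and the second controlled by the initial-data hypothesis $u_0\in L^{p+1}(\Omega)$ and a Gronwall argument applied to the energy identity. For uniqueness, I would take two solutions $(u_i,w_i)$ with identical data, subtract the variational identities, and test against admissible multipliers of the differences. The key estimate,
\begin{equation*}
\|f(u_1)-f(u_2)\|_2 \leq C\bigl(\|u_1\|_{3(p-1)}^{p-1}+\|u_2\|_{3(p-1)}^{p-1}+1\bigr)\|u_1-u_2\|_6,
\end{equation*}
together with an $L^\infty_tL^{3(p-1)/2}$ bound on $u$ propagated from the hypothesis $u_0\in L^{3(p-1)/2}(\Omega)$ (using Assumption \ref{ass-2} to differentiate in time), yields a Gronwall-type inequality on the energy of the difference. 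The alternative hypothesis $u_0\in L^{3(p-1)}$ with $m\geq 3p-4$ replaces the $C^2$ condition on $f$ by using the damping to absorb the singular $\|u_i\|_{3(p-1)}^{p-1}$ factor directly.

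The hardest point throughout is never estimating $u_t|_\Gamma$ in $L^2(\Gamma)$ independently, since the trace of a generic $L^2(\Omega)$ function is not defined. The remedy, already visible in the variational identity (\ref{wkslnplt}) where $w_t+\gamma u$ appears as a unit, is to treat $\mathcal{W}:=w_t+\gamma u$ as the genuine dynamical variable for the plate, so that $u_t|_\Gamma$ enters only through the (distributional) time derivative $\mathcal{W}_t - \gamma u_t$ which can be reconstructed from the plate equation without ever isolating $u_t|_\Gamma$.
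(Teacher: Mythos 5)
This theorem is not proved in the paper at all: it is quoted verbatim from Becklin and Rammaha \cite{Becklin-Rammaha2}, and the authors explicitly state that they are only summarizing that reference for the reader's convenience. There is therefore no internal proof to compare your proposal against. Judged on its own terms, your sketch follows the strategy that is standard for supercritical source--damping problems (and that underlies the cited work): truncate the source to restore local Lipschitz continuity, solve the truncated problem by nonlinear semigroup theory for a maximal monotone damping operator, derive $K$-independent energy bounds using the structural condition $p\tfrac{m+1}{m}<6$ together with $H^1_{\Gamma_0}(\Omega)\hookrightarrow L^6(\Omega)$, and pass to the limit. Your observation that $w_t+\gamma u$ is the correct dynamical variable for the plate component is exactly the structure encoded in the variational identity \eqref{wkslnplt}, and the global-existence argument under $p\leq m$, $q\leq r$ is the expected one.

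Two places in your sketch are genuinely thin and would need real work if this were to stand as a proof. First, removing the truncation: Aubin--Lions gives strong convergence of $u_K$ and hence identifies the limit of $f_K(u_K)$, but it does not identify the weak limit of the nonlinear damping terms $g_1(\partial_t u_K)$ and $g_2(\partial_t w_K)$, since $\partial_t u_K$ converges only weakly in $L^{m+1}(\Omega\times(0,T))$; one needs a Minty--Browder monotonicity argument there. Relatedly, limits of approximate solutions typically yield only an energy \emph{inequality}; upgrading it to the identity \eqref{energy} for weak solutions cannot be dismissed as ``by density'' and is one of the delicate points of \cite{Becklin-Rammaha2}. Second, the uniqueness argument: the difference of two weak solutions is not an admissible test function in Definition \ref{def:weaksln} (it lacks the required time regularity), so ``subtract and test against the difference'' must be replaced by a more careful approximation or time-averaging argument, and propagating the $L^{3(p-1)/2}$ bound on $u$ from the initial datum is itself a nontrivial step requiring Assumption \ref{ass-2}. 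These gaps are standard to fill but are precisely where the cited paper's effort lies.
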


\vspace{0.1 in}

\subsection{Potential well solutions}
In this subsection we briefly discuss the potential well theory which originates from the theory of elliptic equations. In order to do so, we need to impose additional assumptions  on the source terms $f(u)$ and  $h(w)$.

\begin{assumption}\label{ass1} 
\hfill
\begin{itemize}
\item
 There exists a nonnegative function
$F(u)\in C^1(\mathbb{R})$ such that $F'(u)=f(u)$, and $F$ is
homogeneous of order $p+1$, i.e., $F(\lambda u)=\lambda^{p+1}F(u)$,
for $\lambda>0,\ u\in\mathbb{R}$. 
\item There exists a nonnegative function $H(s)\in C^1(\mathbb{R})$
such that $H'(s)=h(s)$, and $H$ is homogeneous of order $q+1$, i.e.,
$H(\lambda s)=\lambda^{q+1}H(s)$, for $\lambda>0,\ s\in\mathbb{R}$.
\end{itemize}
\end{assumption}

\begin{rmk}\label{rmk3-1}
From Euler homogeneous function theorem we infer that
\begin{align}\label{3-3}
uf(u)=(p+1)F(u),\ \ wh(w)=(q+1)H(w).
\end{align}
Because of Assumption \ref{ass} and the homogeneity of $F$ and $H$, we obtain that there exists a positive constant $M$ such that
\begin{align}\label{3-4}
F(u)\leq M|u|^{p+1},\ \ H(w)\leq M|w|^{q+1}.
\end{align}
Moreover, due to (\ref{3-3}), $f$ is homogeneous of order $p$ and
$h$ is homogeneous of order $q$ satisfying
\begin{align}\label{3-6}
|f(u)|\leq M(p+1) |u|^{p},\ \ |h(w)|\leq M (q+1)|w|^q.
\end{align}
\end{rmk}

Recall the quadratic energy $E(t)$ has been introduced in (\ref{def-qua}).
Now, we define the total energy $\mathcal{E}(t)$ of system \eqref{PDE} by
\begin{align}\label{3-7}
\mathcal{E}(t)
:&=E(t)-\int_\Omega F(u(t))dx-\int_\Gamma
H(w(t))d\Gamma \notag\\
&= \frac{1}{2}\left(\|u_t(t)\|_2^2+\|\nabla u(t)\|_2^2 +\|u(t)\|^2_2 +|w_t(t)|_2^2+|\Delta w(t)|_2^2\right) \notag\\
& \;\;\;\;\;- \int_\Omega F(u(t))dx-\int_\Gamma
H(w(t))d\Gamma.
\end{align}

Then, the energy identity (\ref{energy}) is equivalent to 
\begin{align} \label{energy-2}
\mathcal E(t) + \int_0^t\int_\O g_1(u_t)u_tdxd\t+\int_0^t\int_\G g_2(w_t)w_td\G d\t
=\mathcal E(0).
\end{align}

With $X:=H^1_{\Gamma_0}(\Omega)\times H^2_0(\Gamma)$, we define the
functional $\mathcal{J}:X\to \mathbb{R}$ by
\begin{align}\label{3-8}
\mathcal{J}(u,w):=\frac{1}{2}(\|\nabla u(t)\|^2_2   +\|u\|^2_2    +|\Delta
w(t)|^2_2)-\int_\Omega F(u(t))dx-\int_\Gamma H(w(t))d\Gamma,
\end{align}
where $\mathcal J(u,w)$ is the potential energy of the system. Then we
have
\begin{align}\label{3-9}
\mathcal{E}(t)=\mathcal{J}(u,w)+\frac{1}{2}(\|u_t(t)\|^2_2+|w_t(t)|^2_2).
\end{align}

The Fr\'{e}chet derivative of $\mathcal{J}$ at $(u,w)\in X$ is given
by
\begin{align}\label{fre}
\langle\mathcal{J}'(u,w),(\phi,\psi)\rangle=&\int_\Omega\nabla
u\cdot\nabla\phi dx+\int_\Gamma\Delta w\cdot\Delta\psi d\Gamma+\int_\Omega u\phi dx\nonumber\\
&-\int_\Omega f(u)\phi dx-\int_\Gamma h(w)\psi d\Gamma,
\end{align}
for $(\phi,\psi)\in X$.  The \emph{Nehari manifold} $\mathcal N$ can be defined by
\begin{align}
\mathcal{N}:=\left\{(u,w)\in X\backslash\{(0,0)\}:
\langle\mathcal{J}'(u,w),(u,w)\rangle=0\right\},\nonumber
\end{align}
which along with \eqref{fre} gives
\begin{align}\label{3-10}
\mathcal{N}=\bigg\{(u,w)\in X\backslash\{(0,0)\}: \|\nabla
u\|^2_2    +\|u\|^2_2      +|\Delta w|^2_2&=(p+1)\int_\Omega F(u)dx\nonumber\\
&\quad+(q+1)\int_\Gamma
H(w)d\Gamma\bigg\}.
\end{align}

By Lemma 2.8 in our paper \cite{Feng-Guo-Rammaha1} and Lemma 2.7 in \cite{GR2}, the depth of the potential well $d$ is positive and satisfies
\begin{align}\label{3-11}
d:=\inf_{(u,w)\in\mathcal{N}}\mathcal{J}(u,w)=\inf_{(u,w)\in
X\backslash\{(0,0)\}}\sup_{\lambda\geq0}\mathcal{J}(\lambda (u,w))>0,
\end{align}
for $1<p\leq 5,\ q>1$.

We define
\begin{align*}
\mathcal{W}&:=\{(u,w)\in X: \mathcal{J}(u,w)<d\},\\
\mathcal{W}_1&:=\left\{(u,w)\in\mathcal{W}:\|\nabla u\|^2_2     +\|u\|^2_2      +|\Delta w|^2_2>(p+1)\int_\Omega F(u)dx+(q+1)\int_\Gamma H(w)d\Gamma\right\}\nonumber\\
&\qquad\qquad\cup\{(0,0)\},\\
\mathcal{W}_2&:=\left\{(u,w)\in\mathcal{W}:\|\nabla u\|^2_2   +\|u\|^2_2   +|\Delta
w|^2_2<(p+1)\int_\Omega F(u)dx+(q+1)\int_\Gamma H(w)d\Gamma\right\}.
\end{align*}
It is obvious that $\mathcal{W}_1\cup\mathcal{W}_2=\mathcal{W}$ and
$\mathcal{W}_1\cap\mathcal{W}_2=\emptyset.$ We call $\mathcal{W}$
the potential well and $d$ is the depth of the well.
We call $\mathcal W_1$ the stable part of the potential well, and $\mathcal W_2$ the unstable part of the potential well.

For initial data coming from the stable part of the potential well, we have proved the following result of global solutions in \cite{Feng-Guo-Rammaha1}.

\begin{thm}[{\bf Potential well solutions \cite{Feng-Guo-Rammaha1}}]\label{thm3-1}
Assume that Assumption \ref{ass} and Assumption \ref{ass1}
hold. Let $1<p\leq 5$ and $q>1$. Assume further
$(u_0,w_0)\in\mathcal{W}_1$ and $\mathcal{E}(0)<d$. Then system
\eqref{PDE} admits a global solution $(u,w)$. In addition,  for
any $t\geq 0$, we have
\begin{align}
\begin{cases}
    (i)\  \mathcal{J}(u,w)\leq
\mathcal{E}(t)\leq\mathcal{E}(0),  \\
    (ii)\
(u,w)\in\mathcal{W}_1,\\
    (iii)\  E(t)\leq \displaystyle\frac{cd}{c-2}, \\
    (iv)\ \displaystyle\frac{c-2}{c}E(t)\leq\mathcal{E}(t)\leq E(t),
 \end{cases}\nonumber
\end{align}
 where
$c=\min\{p+1,q+1\}>2$.
\end{thm}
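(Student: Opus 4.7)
The plan is to obtain (i) immediately from the energy identity, then use it together with continuity of the flow in $X$ to establish the invariance statement (ii), and finally squeeze (iii) and (iv) out of the sign of the Nehari functional that (ii) provides. Global existence will follow from a routine continuation argument once $E(t)$ is shown to stay a priori bounded.

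First, for (i): because $g_1,g_2$ are monotone with $g_i(0)=0$, we have $g_i(s)s\ge 0$, so both damping integrals in the energy identity \eqref{energy-2} are nonnegative; hence $\mathcal E(t)\le\mathcal E(0)$. Combined with the decomposition \eqref{3-9}, $\mathcal J(u(t),w(t))\le\mathcal E(t)\le\mathcal E(0)<d$, which gives (i).

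For (ii), I would introduce the Nehari functional
\[
I(u,w):=\|\nabla u\|_2^2+\|u\|_2^2+|\Delta w|_2^2-(p+1)\int_\Omega F(u)\,dx-(q+1)\int_\Gamma H(w)\,d\Gamma,
\]
so that $\mathcal{W}_1\setminus\{(0,0)\}=\{(u,w)\in\mathcal{W}:I(u,w)>0\}$, $\mathcal{W}_2=\{(u,w)\in\mathcal{W}:I(u,w)<0\}$, and $\mathcal{N}=\{I=0\}\setminus\{(0,0)\}$. Since the weak solution is continuous into $X$, and $F,H$ have Sobolev-controlled continuity (using $p\le 5$ and $H^1(\Gamma)\hookrightarrow L^{q+1}(\Gamma)$), the map $t\mapsto I(u(t),w(t))$ is continuous. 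Arguing by contradiction, suppose the trajectory exits $\mathcal{W}_1$. By (i) it stays in $\mathcal{W}$, so it can only leave through $\{I=0\}$; let $t_0>0$ be the first such exit time. If $(u(t_0),w(t_0))\in\mathcal{N}$, the characterization of $d$ in \eqref{3-11} forces $\mathcal J(u(t_0),w(t_0))\ge d$, contradicting (i). If instead $(u(t_0),w(t_0))=(0,0)$, the bounds in \eqref{3-4} together with Sobolev embedding give $(p+1)\int F+(q+1)\int H\le C(\|\nabla u\|_2^{p+1}+|\Delta w|_2^{q+1})$, which on a punctured $X$-neighborhood of the origin is strictly dominated by $\|\nabla u\|_2^2+\|u\|_2^2+|\Delta w|_2^2$ (since $p+1,q+1>2$); hence $I>0$ there and the trajectory cannot exit $\mathcal W_1$ through the origin either.

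For (iii) and (iv): since (ii) gives $I(u(t),w(t))\ge 0$ for all $t$, setting $c=\min\{p+1,q+1\}>2$ we obtain
\[
c\Bigl(\int_\Omega F(u)\,dx+\int_\Gamma H(w)\,d\Gamma\Bigr)\le (p+1)\int_\Omega F(u)\,dx+(q+1)\int_\Gamma H(w)\,d\Gamma\le \|\nabla u\|_2^2+\|u\|_2^2+|\Delta w|_2^2.
\]
Plugging this into \eqref{3-8}--\eqref{3-9} yields
\[
\mathcal E(t)\ge \frac{c-2}{2c}\bigl(\|\nabla u\|_2^2+\|u\|_2^2+|\Delta w|_2^2\bigr)+\tfrac12\bigl(\|u_t\|_2^2+|w_t|_2^2\bigr)\ge \frac{c-2}{c}E(t),
\]
since $(c-2)/(2c)<1/2$. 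Combined with (i) this gives $E(t)\le \frac{c}{c-2}\mathcal E(0)<\frac{cd}{c-2}$, which is (iii); the reverse bound $\mathcal E(t)\le E(t)$ in (iv) is immediate from $F,H\ge 0$. Global existence follows because the local existence time in Theorem~\ref{t:1} depends only on the initial quadratic energy, and since $E(t)$ remains bounded by $cd/(c-2)$, iterating the local theorem extends $(u,w)$ to $[0,\infty)$. The main obstacle I anticipate is the ``through the origin'' subcase in (ii), where one must rule out the trajectory passing through $(0,0)$ from $\mathcal W_1$ into $\mathcal W_2$; this relies essentially on the subcritical hypothesis $p\le 5$ via $H^1_{\Gamma_0}(\Omega)\hookrightarrow L^{p+1}(\Omega)$.
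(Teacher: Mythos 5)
Your proof is correct and follows essentially the standard potential-well approach: the paper only cites this theorem from \cite{Feng-Guo-Rammaha1} rather than proving it here, but your invariance argument for (ii) is the exact mirror image of the paper's Appendix proof of Lemma \ref{lem3-2-1} (the same first-crossing-time contradiction, the same dichotomy between landing on $\mathcal{N}$ — forcing $\mathcal{J}\geq d$ — and passing through the origin, and the same small-norm estimate showing $\|(u,w)\|_X^{p-1}+\|(u,w)\|_X^{q-1}>1/C$ excludes the origin case). Parts (i), (iii), (iv) and the continuation argument for global existence are the standard computations, carried out correctly.
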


In paper \cite{Feng-Guo-Rammaha1}, we also studied the energy decay rates for potential well solutions. 

It is shown in Theorem \ref{thm3-1} the invariance of $\mathcal{W}_1$ under the dynamics. In fact we have the same result for $\mathcal{W}_2$.

\begin{lem} \label{lem3-2-1}
Assume that Assumption \ref{ass} and Assumption \ref{ass1}
hold. Let $1<p\leq 5$ and $q>1$. Assume further
$(u_0,w_0)\in\mathcal{W}_2$ and $\mathcal{E}(0)<d$. Then the weak solution $(u(t),w(t))$ is in $\mathcal{W}_2$ for all $t\in [0,T)$, where $[0,T)$ is the maximal interval of existence.
\end{lem}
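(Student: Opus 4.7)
The plan is a standard continuity-and-contradiction argument showing that the sign of the Nehari-type functional
\[
K(u,w):=\|\nabla u\|^2_2 + \|u\|^2_2 + |\Delta w|^2_2 - (p+1)\int_\Omega F(u)\,dx - (q+1)\int_\Gamma H(w)\,d\Gamma
\]
is preserved along the flow, since $\mathcal{W}_2 = \{(u,w)\in\mathcal{W}: K(u,w)<0\}$. First, the energy identity \eqref{energy-2} together with $g_1(s)s,\,g_2(s)s\geq 0$ (from the monotonicity in Assumption \ref{ass}) shows that $\mathcal{E}(t)$ is non-increasing; combined with \eqref{3-9}, this yields $\mathcal{J}(u(t),w(t))\leq \mathcal{E}(t)\leq \mathcal{E}(0)<d$ for every $t\in[0,T)$, so $(u(t),w(t))\in\mathcal{W}$ throughout the maximal interval of existence. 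Moreover, the regularity $u\in C([0,T];H^1_{\Gamma_0}(\Omega))$ and $w\in C([0,T];H^2_0(\Gamma))$ from Definition \ref{def:weaksln}, combined with \eqref{3-4} and the Sobolev embeddings $H^1_{\Gamma_0}(\Omega)\hookrightarrow L^{p+1}(\Omega)$ (valid since $p\leq 5$) and $H^2_0(\Gamma)\hookrightarrow L^{q+1}(\Gamma)$, imply that $t\mapsto K(u(t),w(t))$ is continuous.

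Now suppose for contradiction that the trajectory leaves $\mathcal{W}_2$. Since $K(u_0,w_0)<0$, continuity provides a first time $t_0>0$ at which $K(u(t_0),w(t_0))=0$, with $K(u(t),w(t))<0$ on $[0,t_0)$. If $(u(t_0),w(t_0))\neq(0,0)$, then by \eqref{3-10} we have $(u(t_0),w(t_0))\in\mathcal{N}$, so the characterization \eqref{3-11} of the well depth forces $\mathcal{J}(u(t_0),w(t_0))\geq d$, contradicting the strict bound $\mathcal{J}(u(t_0),w(t_0))<d$ established above. If instead $(u(t_0),w(t_0))=(0,0)$, I exploit that on $[0,t_0)$ the strict inequality $K<0$ combined with \eqref{3-4} and the Sobolev embeddings yields
\[
\|u(t)\|^2_{H^1_{\Gamma_0}(\Omega)} + |\Delta w(t)|^2_2 < C_1\|u(t)\|^{p+1}_{H^1_{\Gamma_0}(\Omega)} + C_2|\Delta w(t)|^{q+1}_2,
\]
with constants independent of $t$. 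Since $p,q>1$ and $(u(t),w(t))\neq(0,0)$ on $[0,t_0)$ (as $\mathcal{W}_2$ excludes the origin), choosing $\delta>0$ with $\max(C_1\delta^{p-1},C_2\delta^{q-1})\leq 1/2$ forces $\|u(t)\|^2_{H^1_{\Gamma_0}(\Omega)}+|\Delta w(t)|^2_2\geq \delta^2$ uniformly on $[0,t_0)$, which by continuity persists at $t_0$ and contradicts $(u(t_0),w(t_0))=(0,0)$.

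The main obstacle is the degenerate case $(u(t_0),w(t_0))=(0,0)$: one must convert the strict sign condition $K<0$ into a quantitative uniform-in-time separation of the trajectory from the origin. Once this quantitative lower bound is in place, the remaining ingredients, namely invariance of $\mathcal{W}$ from the dissipative energy identity, continuity of $K$ via the Sobolev embeddings, and the Nehari-manifold contradiction, are routine.
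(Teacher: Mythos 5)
Your proposal is correct and follows essentially the same route as the paper's own proof: invariance of $\mathcal{W}$ from the dissipative energy identity, continuity of the Nehari functional along the trajectory via the Sobolev embeddings, the contradiction $\mathcal{J}\geq d$ at a first crossing time when the state is nonzero, and a uniform separation of the trajectory from the origin (via \eqref{3-4} and the embeddings) to rule out the degenerate case. The only differences are cosmetic — you phrase the argument in terms of a first exit time for the sign of $K$, whereas the paper takes the infimum over times where the Nehari equality holds — so no further comment is needed.
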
 

\begin{proof}
Please see the Appendix.
\end{proof}

\begin{rmk}
For initial values coming from the unstable part $\mathcal W_2$ of the potential well, 
we shall state a blow-up result, namely Corollary \ref{cor1}.
\end{rmk}

\vspace{0.1 in}

\subsection{Main Results}
Our first result is the blow-up of solutions if the source terms are stronger than damping terms, and the initial energy is negative.  In order to state our first blow-up result, we need
additional assumptions on the source terms.

\begin{assumption}\label{ass2} 
\hfill
\begin{itemize}
\item
There exists a function $F(u)\in C^1(\mathbb{R})$ such that $F'(u)=f(u)$. In addition, there exist $c_0>0$ and $c_1>3$ such
that
\begin{align}\label{af1}F(u)\geq c_0|u|^{p+1},\ \ uf(u)\geq c_1F(u),\ \ \forall\
u\in\mathbb{R}.
\end{align}
\item There exists a function $H(s)\in C^1(\mathbb{R})$ such that $H'(s)=h(s)$. In addition, there exist $c_2>0$ and $c_3>3$ such that
\begin{align}\label{af2}
H(s)\geq c_2|s|^{q+1},\ \ \ sh(s)\geq c_3H(s), \ \ \forall\
s\in\mathbb{R}.
\end{align}
\end{itemize}
\end{assumption}

The following blow-up result shows that if the initial energy is
negative, and the  source terms are more dominant than their
corresponding damping terms, then every weak solution of
\eqref{PDE} blows up in finite time.
\begin{thm}[\bf Blow-up with negative initial energy]\label{thm6-1}
Suppose that  Assumption \ref{ass} and Assumption \ref{ass2}
hold. Assume $p>m$, $q>r$, and $\mathcal{E}(0)<0$. Then the weak solution $(u(t),w(t))$ of
system \eqref{PDE} blows up in finite time. In particular, 
$$
\limsup_{t \to T^-} (\|\nabla u(t)\|_2^2+|\Delta w(t)|_2^2)= +\infty,
$$
for some $0<T<\infty$.
\end{thm}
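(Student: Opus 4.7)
The plan is to adapt the Georgiev--Todorova concavity method to the coupled wave-plate system, using the boundary-convolution trick flagged in the introduction. Set $G(t):=-\mathcal{E}(t)$; by the energy identity (\ref{energy-2}) and $\mathcal{E}(0)<0$, $G$ is non-decreasing with $G(t)\ge G(0)>0$ and $G'(t)=(g_1(u_t),u_t)_\O+(g_2(w_t),w_t)_\G\ge 0$. Define
\begin{equation*}
N(t) := \tfrac{1}{2}\|u(t)\|_2^2 + \tfrac{1}{2}|w(t)|_2^2 + \int_0^t (\g u(\tau), w(\tau))_\G\, d\tau,
\end{equation*}
so that $N'(t)=(u,u_t)_\O+(w,w_t)_\G+(\g u,w)_\G$. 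Since $p\ge m$ and $q\ge r$, $u$ and $w$ are admissible test functions in (\ref{wkslnwave})--(\ref{wkslnplt}). Substituting $\phi=u$ and $\psi=w$ and adding the two variational identities, the term $(\g u(t),w(t))_\G$ produced by the plate identity is precisely absorbed into $N'(t)$, so that $N'(t)$ is an initial constant plus an integral in $\tau$ whose integrand does not involve the ill-defined $u_t|_\G$. Differentiating yields
\begin{equation*}
\begin{split}
N''(t) = &\,\|u_t\|_2^2 + |w_t|_2^2 - \|\nabla u\|_2^2 - \|u\|_2^2 - |\Delta w|_2^2 + 2(\g u, w_t)_\G \\
&- (g_1(u_t),u)_\O - (g_2(w_t),w)_\G + (f(u),u)_\O + (h(w),w)_\G.
\end{split}
\end{equation*}

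Applying Assumption \ref{ass2} yields $(f(u),u)_\O+(h(w),w)_\G\ge c[G(t)+E(t)]$ with $c:=\min\{c_1,c_3\}>3$, using $\int_\O F(u)+\int_\G H(w)=G(t)+E(t)$. Substituting the expression for $E(t)$ and bounding the boundary cross term $|2(\g u,w_t)_\G|$ by Young's inequality together with a sharp trace estimate for the norm on $\H(\O)$---the $+u$ in the wave equation contributes the $\|u\|_2^2$ that makes the trace constants work---preserves strictly positive coefficients on the quadratic energy. The damping--solution cross terms are controlled by H\"older and Young using the growth of $g_1$ and $g_2$: $|(g_1(u_t),u)_\O|\le \delta\|u\|_{m+1}^{m+1}+C_\delta(G'(t)+1)$, and analogously for $g_2$. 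Using $p>m$, $q>r$ together with $F(u)\ge c_0|u|^{p+1}$ and $H(w)\ge c_2|w|^{q+1}$, a further Young interpolation absorbs the $\delta$-terms into a small fraction of $G(t)+E(t)$, while the additive constants are absorbed into $\tfrac{c}{4}G(t)$ using $G(t)\ge G(0)>0$. Collecting bounds,
\begin{equation*}
N''(t)+CG'(t)\ge \kappa_1(\|u_t\|_2^2+|w_t|_2^2)+\kappa_2(\|\nabla u\|_2^2+\|u\|_2^2+|\Delta w|_2^2)+\tfrac{c}{2}G(t)
\end{equation*}
for some $C,\kappa_1,\kappa_2>0$ independent of $t$.

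Set $Y(t):=G^{1-a}(t)+\varepsilon N'(t)$ with $a\in(0,\min\{\tfrac{p-1}{2(p+1)},\tfrac{q-1}{2(q+1)}\}]$ and $\varepsilon>0$ chosen so that $Y(0)>0$ and $\varepsilon C\le (1-a)G(0)^{-a}$. Then
\begin{equation*}
Y'(t)\ge \varepsilon\bigl[\kappa_1(\|u_t\|_2^2+|w_t|_2^2)+\kappa_2(\|\nabla u\|_2^2+\|u\|_2^2+|\Delta w|_2^2)+\tfrac{c}{2}G(t)\bigr].
\end{equation*}
The key concavity bound $Y'(t)\ge\eta Y(t)^{1/(1-a)}$ reduces, via $(G^{1-a}+\varepsilon N')^{1/(1-a)}\le C[G+\varepsilon^{1/(1-a)}|N'|^{1/(1-a)}]$, to showing $|N'(t)|^{1/(1-a)}\le C[G(t)+\|u_t\|_2^2+|w_t|_2^2+\|\nabla u\|_2^2+\|u\|_2^2+|\Delta w|_2^2]+C$. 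Cauchy--Schwarz gives $|N'|\le \|u\|_2\|u_t\|_2+|w|_2|w_t|_2+|\g u|_2|w|_2$, and $\|u\|_2\le C\|u\|_{p+1}\le C(G+E)^{1/(p+1)}$ with the analogous bound for $|w|_2$; the choice of $a$ is exactly what makes Young's inequality absorb the mixed powers. Integrating the resulting Bernoulli-type inequality forces $Y(t)\to\infty$ at some finite $T>0$, and since $|N'(t)|$ is dominated by the quadratic energy and a sublinear power of $G$, $G(t)\to\infty$ at the same $T$. Combined with $|F(u)|\le C(|u|^{p+1}+1)$ and the embedding $\H(\O)\hookrightarrow L^{p+1}(\O)$, this forces $\|\nabla u\|_2^2+|\Delta w|_2^2\to\infty$ as $t\to T^-$.

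The main obstacles are: (i) legitimizing the $N''$ identity in the absence of a trace for $u_t$, resolved by the integral term in $N$ which engineers the exact cancellation at the level of the variational identities; (ii) handling the boundary cross term $2(\g u,w_t)_\G$ with the precise constants needed to preserve $\kappa_2>0$, for which the $+u$ in the wave equation is indispensable; (iii) tuning $a$ and $\varepsilon$ so that every Young absorption closes globally in $t$, which uses $p>m$, $q>r$, $c_1>3$, $c_3>3$ in an essential way.
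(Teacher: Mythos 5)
Your overall strategy coincides with the paper's: the same auxiliary functional $N(t)$ with the time-integrated boundary term $\int_0^t(\g u,w)_\G\,d\tau$, the same justification of $N''$ by testing with $(u,w)$, the same divergence-theorem trace estimate $|\g u|_2^2\le\|u\|_2^2+\|\nabla u\|_2^2$ exploiting the flatness of $\G$ and the extra $+u$ in the wave equation, and the same Georgiev--Todorova functional $Y=G^{1-a}+\varepsilon N'$. However, there is a genuine gap in your treatment of the damping--solution cross terms, and it is precisely the delicate point of this method. You bound $\left|(g_1(u_t),u)_\O\right|\le \delta\|u\|_{m+1}^{m+1}+C_\delta\bigl(G'(t)+1\bigr)$ and arrive at an inequality of the form $N''(t)+CG'(t)\ge\cdots$ with $C$ a fixed constant, which you then propose to absorb into $Y'(t)=(1-a)G^{-a}(t)G'(t)+\varepsilon N''(t)$ by requiring $\varepsilon C\le(1-a)G(0)^{-a}$. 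This does not close: the absorption must hold pointwise on all of $[0,T)$, i.e.\ you need $(1-a)G^{-a}(t)\ge\varepsilon C$ for every $t$, and since $G$ is non-decreasing (and is supposed to become unbounded), $G^{-a}(t)$ can be arbitrarily small; your condition only controls the value at $t=0$, which is the \emph{largest} value of $(1-a)G^{-a}(t)$. No fixed $\varepsilon>0$ survives.

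The repair is to keep a factor $G^{-a}(t)$ attached to $G'(t)$ in the damping estimate, so that the bad term has exactly the same structure as the good term $(1-a)G^{-a}(t)G'(t)$ and the comparison of coefficients is uniform in $t$. Concretely, using $g_1(s)s\le\beta|s|^{m+1}$, H\"older with $p>m$, and $S(t)\ge c_0\|u\|_{p+1}^{p+1}$, one first gets $\int_\O g_1(u_t)u\,dx\le R_1 S^{\frac{1}{p+1}}(t)\|u_t\|_{m+1}^m$; then Young on $S^{\frac{1}{m+1}}\|u_t\|_{m+1}^m$ and the inequality $\|u_t\|_{m+1}^{m+1}\le G'(t)/\alpha$ yield
\begin{align*}
\int_\O g_1(u_t)u\,dx\le \delta_1 G^{\frac{1}{p+1}-\frac{1}{m+1}}(0)\,S(t)
+C_{\delta_1}\frac{R_1^{\frac{m+1}{m}}}{\alpha}\,G'(t)\,G^{-a}(t)\,G^{a+\frac{1}{p+1}-\frac{1}{m+1}}(0),
\end{align*}
where the last step uses $S\ge G\ge G(0)$ together with the \emph{negativity} of the exponents $\frac{1}{p+1}-\frac{1}{m+1}$ and $a+\frac{1}{p+1}-\frac{1}{m+1}$. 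This forces the additional constraints $a<\frac{1}{m+1}-\frac{1}{p+1}$ and $a<\frac{1}{r+1}-\frac{1}{q+1}$, which are absent from your admissible range for $a$ (you only impose $a\le\min\{\frac{p-1}{2(p+1)},\frac{q-1}{2(q+1)}\}$); these extra constraints are exactly where the hypotheses $p>m$ and $q>r$ enter. With the estimate in this weighted form, one chooses $\delta_1,\delta_2$ first and then $\varepsilon$ small so that the total coefficient of $G^{-a}(t)G'(t)$ in $Y'(t)$ is nonnegative uniformly in $t$, and the rest of your argument (the lower bound $Y\ge\frac12 G^{1-a}(0)$, the differential inequality $Y'\ge C\varepsilon^{1+\sigma}Y^{1/(1-a)}$, and the passage from finite life span to $\limsup(\|\nabla u\|_2^2+|\Delta w|_2^2)=+\infty$ via $S(t)\le C(\|\nabla u\|_2^{p+1}+|\Delta w|_2^{q+1})$) goes through as you describe.
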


\begin{rmk} \label{rmk-p}
Combining the requirements $p>m\geq 1$ and $p\frac{m+1}{m}<6$ from Assumption \ref{ass}, we obtain the restriction that $1<p<5$ and $1\leq m <5$ for the validity of Theorem \ref{thm6-1}.
\end{rmk}

The second result is the blow up of potential well
solutions with positive initial energy. Before stating this result, we shall define several constants. 
Let $y_0>0$ be the unique solution of the equation
\begin{align}\label{F1}
MK_1(p+1)(2y_0)^{\frac{p-1}{2}}+MK_2(q+1)(2y_0)^{\frac{q-1}{2}}=1.
\end{align}
The constants $0<K_1,K_2<\infty$ are given by
\begin{align}\label{3-12}
K_1:=\sup_{u\in
H^1_{\Gamma_0}(\Omega)\backslash\{0\}}\frac{\|u\|^{p+1}_{p+1}}{\|\nabla
u\|^{p+1}_2},\ \ K_2:=\sup_{w\in
H^2_0(\Gamma)\backslash\{0\}}\frac{|w|^{q+1}_{q+1}}{|\Delta
w|^{q+1}_2},
\end{align}
where $K_1$ and $K_2$ are well-defined when $1\leq p\leq 5$ and $q\geq 1$. 
Also, we put
\begin{align} \label{def-dh}
\hat{d}:=y_0-MK_1(2y_0)^{\frac{p+1}{2}}-MK_2(2y_0)^{\frac{q+1}{2}},
\end{align}
where $M>0$ has been introduced in \eqref{3-4}.

\begin{rmk} \label{rmk-d}
We claim that 
\begin{align} \label{dhd}
0<\hat{d}\leq d,
\end{align}
where $d$ is the depth of the potential well, defined in \eqref{3-11}.
The proof of inequality (\ref{dhd}) can be found in the Appendix.
\end{rmk}

Also, we define the positive constant
\begin{align}  \label{defA}
A:= \frac{\lambda}{2(6+\lambda)} y_0,  \;\; \text{where}  \;\; \lambda:=\min\{c_1-3,c_3-3\}>0.
\end{align}

Then we have the following results.

\begin{thm}[\bf Blow-up with positive initial energy] \label{thm6-2}
Suppose that Assumption \ref{ass}, Assumption \ref{ass1} and 
Assumption \ref{ass2} hold.
 Let 
\begin{align}\label{y0}E(0)>y_0,\ \ \mbox{and}\ \
0\leq\mathcal{E}(0)< \min \{A, \hat{d}\}.
\end{align}
Then the weak solution $(u(t),w(t))$ of \eqref{PDE} blows up in
finite time provided $p>m$ and $q>r$. In particular,
$$
\limsup_{t\rightarrow T^-}(\|\nabla u(t)\|_2^2+|\Delta w(t)|_2^2) =+\infty,
$$
for some $0<T<\infty$.
\end{thm}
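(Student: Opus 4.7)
The plan is to adapt the Georgiev--Todorova-type argument used in Theorem \ref{thm6-1} to the positive--energy regime by first establishing that $E(t)>y_{0}$ is invariant under the flow, and then running essentially the same blow--up machinery with a shifted functional $G(t):=A-\mathcal{E}(t)$ in place of $-\mathcal{E}(t)$.

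\textbf{Step 1 (invariance of the unstable set).} Define
$\varphi(y):=y-MK_{1}(2y)^{(p+1)/2}-MK_{2}(2y)^{(q+1)/2}$. By \eqref{F1} the function $\varphi$ is strictly increasing on $[0,y_{0}]$, strictly decreasing on $[y_{0},\infty)$, with maximum $\varphi(y_{0})=\hat d$. Using \eqref{3-4} and the definitions \eqref{3-12},
\[
\int_{\Omega}F(u)\,dx+\int_{\Gamma}H(w)\,d\Gamma
\le MK_{1}\|\nabla u\|_{2}^{p+1}+MK_{2}|\Delta w|_{2}^{q+1}
\le MK_{1}(2E(t))^{\frac{p+1}{2}}+MK_{2}(2E(t))^{\frac{q+1}{2}},
\]
so $\mathcal{E}(t)\ge\varphi(E(t))$. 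Since $\mathcal{E}(t)\le\mathcal{E}(0)<\hat d$ and $E(0)>y_{0}$, a continuity argument on the continuous map $t\mapsto E(t)$ precludes $E(t)$ from ever reaching $y_{0}$ (which would force $\mathcal{E}(t)\ge\hat d$). Hence $E(t)>y_{0}$ for all $t$ in the maximal existence interval.

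\textbf{Step 2 (auxiliary functional).} Put $G(t):=A-\mathcal{E}(t)$ and
\[
N(t):=\tfrac12\|u(t)\|_{2}^{2}+\tfrac12|w(t)|_{2}^{2}
+\int_{0}^{t}\!\!\int_{\Gamma}\gamma u(\tau)\,w(\tau)\,d\Gamma\,d\tau,
\qquad Y(t):=G^{1-a}(t)+\varepsilon N'(t),
\]
for some small $a\in(0,1)$ and $\varepsilon>0$ to be chosen. By \eqref{energy-2} and Assumption \ref{ass}, $G'(t)\ge \alpha(\|u_{t}\|_{m+1}^{m+1}+|w_{t}|_{r+1}^{r+1})\ge 0$, and $G(0)=A-\mathcal{E}(0)>0$ by hypothesis \eqref{y0}. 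Testing the wave equation by $u$ and the plate equation by $w$, the built--in boundary term $\int_{\Gamma}\gamma u(\tau)w(\tau)d\Gamma d\tau$ in $N(t)$ cancels the problematic $(\gamma u_{t},w)_{\Gamma}$ contribution from the plate equation (this is exactly the ``trick'' highlighted in the introduction), leaving
\[
N''(t)=2(\|u_{t}\|_{2}^{2}+|w_{t}|_{2}^{2})-2E(t)+(f(u),u)_{\Omega}+(h(w),w)_{\Gamma}
-(g_{1}(u_{t}),u)_{\Omega}-(g_{2}(w_{t}),w)_{\Gamma}+2(\gamma u,w_{t})_{\Gamma}.
\]

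\textbf{Step 3 (the crucial positive lower bound on $N''$).} Using \eqref{af1}--\eqref{af2} with $c:=\min\{c_{1},c_{3}\}=3+\lambda$, together with $\int F+\int H=E(t)-\mathcal{E}(t)$,
\[
N''(t)\ge 2(\|u_{t}\|_{2}^{2}+|w_{t}|_{2}^{2})+(c-2)E(t)-c\,\mathcal{E}(t)
-(g_{1}(u_{t}),u)_{\Omega}-(g_{2}(w_{t}),w)_{\Gamma}+2(\gamma u,w_{t})_{\Gamma}.
\]
Rewriting $-c\,\mathcal{E}(t)=cG(t)-cA$ and invoking Step 1 (so $E(t)>y_{0}$), the choice $A=\tfrac{\lambda}{2(6+\lambda)}y_{0}$ in \eqref{defA} is tuned so that $(c-2)y_{0}-cA>0$ by a definite margin; this produces
\[
N''(t)\ge 2(\|u_{t}\|_{2}^{2}+|w_{t}|_{2}^{2})+cG(t)+\delta E(t)
-(g_{1}(u_{t}),u)_{\Omega}-(g_{2}(w_{t}),w)_{\Gamma}+2(\gamma u,w_{t})_{\Gamma},
\]
for some $\delta>0$. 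This is the precise analogue of the estimate available in the negative--energy case, but with the genuinely positive quantity $cG(t)+\delta E(t)$ in place of $-c\mathcal{E}(t)$.

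\textbf{Step 4 (closing the differential inequality).} The remaining steps now mirror the proof of Theorem \ref{thm6-1}. Using H\"older and Young, $(g_{1}(u_{t}),u)_{\Omega}$ is absorbed into a small multiple of $\|u_{t}\|_{m+1}^{m+1}\le\alpha^{-1}G'(t)$ plus a fraction of $\|u\|_{p+1}^{p+1}$, and similarly for $g_{2}$; the hypotheses $p>m$ and $q>r$ ensure the source exponents dominate the damping exponents so that the $\|u\|_{p+1}^{p+1}$, $|w|_{q+1}^{q+1}$ pieces get controlled by $G(t)$ via \eqref{af1}--\eqref{af2} and the energy identity. The cross term $2(\gamma u,w_{t})_{\Gamma}$ is handled by the trace inequality (6-14-2) alluded to in the introduction, whose coefficients depend on having the extra lower--order term $u$ in the wave operator. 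Choosing $a\in(0,1)$ small and $\varepsilon>0$ small then yields
\[
Y'(t)\ge C\,Y(t)^{\,1+\eta}\qquad\text{for some }\eta>0,
\]
which forces $Y(t)\to\infty$, and hence $\|\nabla u(t)\|_{2}^{2}+|\Delta w(t)|_{2}^{2}\to\infty$, in finite time.

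The main obstacle I expect is ensuring the $(c-2)E(t)-c\mathcal{E}(t)$ computation produces a clean lower bound of the form $cG(t)+\delta E(t)$ with all subsequent absorptions (of damping and of the boundary cross term) leaving a strictly positive coefficient in front of $G(t)$; this is exactly where the precise value $A=\tfrac{\lambda}{2(6+\lambda)}y_{0}$ and the sharp trace inequality (6-14-2) enter, and where the analysis departs from the negative--energy case.
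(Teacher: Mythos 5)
Your proposal is correct and follows essentially the same route as the paper's proof: the invariance $E(t)>y_0$ via $\mathcal{E}(t)\ge F_1(E(t))$, the shifted functional $\mathcal G(t)=A-\mathcal E(t)$, the same auxiliary $N(t)$ with the boundary integral, and the same absorption scheme tuned to $A=\tfrac{\lambda}{2(6+\lambda)}y_0$. The only deviation is cosmetic bookkeeping (you absorb the cross term $2(\gamma u,w_t)_\Gamma$ after rewriting $-c\mathcal E(t)$ rather than before, and you keep a fraction of $E(t)$ and $\mathcal G(t)$ in place of the paper's explicit $\tfrac{\lambda}{2}S(t)$ reserve, which is equivalent since $S=E+\mathcal G-A$), and the margins indeed survive with the paper's constant $A$.
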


\begin{cor}\label{cor1}
Suppose that  Assumption \ref{ass}, Assumption \ref{ass1} and 
Assumption \ref{ass2} hold.    Let  $p>m$, $q>r$ and
$$ 
0\leq\mathcal{E}(0)<\min \{A, \hat{d}\}.
$$ 
 If $(u_0,w_0)\in\mathcal{W}_2$, then the weak solution $(u(t),w(t))$ of \eqref{PDE} blows up in
finite time.
\end{cor}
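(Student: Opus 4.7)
The plan is to reduce the corollary to Theorem \ref{thm6-2} by showing that the hypothesis $(u_0,w_0)\in\mathcal{W}_2$ automatically upgrades the energy bound $\mathcal{E}(0)<\min\{A,\hat{d}\}$ to the quadratic-energy condition $E(0)>y_0$ required by Theorem \ref{thm6-2}. The rest will then be a direct invocation of that theorem.

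First, I would set $P_0:=\|\nabla u_0\|_2^2+\|u_0\|_2^2+|\Delta w_0|_2^2$ and combine the defining inequality of $\mathcal{W}_2$ with the growth bound \eqref{3-4} and the Sobolev constants \eqref{3-12}. Specifically, for $(u_0,w_0)\in\mathcal{W}_2$,
\begin{align*}
P_0 &< (p+1)\int_\Omega F(u_0)\,dx+(q+1)\int_\Gamma H(w_0)\,d\Gamma\\
&\leq (p+1)M\|u_0\|_{p+1}^{p+1}+(q+1)M|w_0|_{q+1}^{q+1}\\
&\leq (p+1)MK_1\|\nabla u_0\|_2^{p+1}+(q+1)MK_2|\Delta w_0|_2^{q+1}\\
&\leq (p+1)MK_1\,P_0^{(p+1)/2}+(q+1)MK_2\,P_0^{(q+1)/2},
\end{align*}
which, after dividing by $P_0>0$, yields $(p+1)MK_1\,P_0^{(p-1)/2}+(q+1)MK_2\,P_0^{(q-1)/2}>1$.

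Next, recalling that $y_0$ is the unique positive root of \eqref{F1}, the left-hand side of the last inequality, viewed as a function of $P_0$, is strictly increasing (since $p,q>1$ under the assumption $p>m\geq1$, $q>r\geq1$) and equals $1$ precisely when $P_0=2y_0$. Hence $P_0>2y_0$. Since $E(0)\geq \tfrac{1}{2}P_0$ by definition \eqref{def-qua}, I conclude $E(0)>y_0$.

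Finally, the assumption $0\leq \mathcal{E}(0)<\min\{A,\hat{d}\}$ is exactly the remaining hypothesis of Theorem \ref{thm6-2}, so that theorem applies and gives the finite-time blow-up $\limsup_{t\to T^-}(\|\nabla u(t)\|_2^2+|\Delta w(t)|_2^2)=+\infty$. The only subtle point is the reduction step above; there is no genuine obstacle since everything follows from \eqref{3-4}, \eqref{3-12}, and monotonicity of the comparison function defining $y_0$.
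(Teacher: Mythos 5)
Your proposal is correct and follows essentially the same route as the paper: reduce to Theorem \ref{thm6-2} by showing $(u_0,w_0)\in\mathcal{W}_2$ forces $\|\nabla u_0\|_2^2+\|u_0\|_2^2+|\Delta w_0|_2^2>2y_0$ via the chain of inequalities from \eqref{3-4} and \eqref{3-12} together with the defining equation \eqref{F1} of $y_0$, hence $E(0)>y_0$. The paper phrases this with the product norm $\|(u_0,w_0)\|_X^2$ in place of your $P_0$, but the argument is identical.
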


\vspace{0.1 in}

\section{Blow-up of solutions with negative initial energy}\label{blowup1}
In this section, we prove Theorem \ref{thm6-1}, which says that the weak solution of system (\ref{PDE}) blows up in finite time if the source terms are more dominant than damping terms and the initial total energy is negative.
\begin{proof}[Proof of Theorem \ref{thm6-1}]
Let $(u(t),w(t)$ be a weak solution of \eqref{PDE} in the sense of
Definition \ref{def:weaksln}. We define the life span $T$ of such a
solution $(u(t),w(t))$ to be the supremum of all $T^*>0$ such that
$(u(t),w(t))$ is a solution to system \eqref{PDE} in the sense of
Definition \ref{def:weaksln} on $[0, T^*]$. In the following, we will
show that $T$ is finite and obtain an upper bound for the life span
of solutions.

As in \cite{ACCRT,BL3,GR1}, for any $t\in[0,T)$, we define
$$
G(t)=-\mathcal{E}(t),\ \ 
S(t)=\int_\Omega F(u(t))dx+\int_\Gamma H(w(t))d\Gamma,
$$
where the total energy $\mathcal E(t)$ has been introduced in (\ref{3-7}).

Clearly,
\begin{align}
G(t)=-\frac{1}{2}(\|u_t\|^2_2+|w_t|^2_2+\|\nabla u\|^2_2+ \|u\|_2^2 +  |\Delta
w|^2_2)+S(t),\nonumber
\end{align}
which implies
\begin{align}\label{6-1}
\|u_t(t)\|^2_2+|w_t(t)|^2_2+\|\nabla u(t)\|^2_2+\|u(t)\|^2_2+|\Delta w(t)|^2_2= -2G(t)+2S(t).
\end{align}
We define
\begin{align} \label{def-N}
N(t):=\frac{1}{2}\left(\|u(t)\|^2_2+|w(t)|^2_2\right)+\int^t_0\int_\Gamma\gamma u(\tau)\cdot
w(\tau)d\Gamma d\tau,
\end{align}
then we have
\begin{align}\label{6-2}
N'(t)=\int_\Omega u(t)u_t(t)dx+\int_\Gamma
w(t)w_t(t)d\Gamma+\int_\Gamma\gamma u(t)\cdot w(t)d\Gamma.
\end{align}
It follows from  Assumption \ref{ass2} that
\begin{align}\label{6-3}
S(t)\geq c_0\|u(t)\|^{p+1}_{p+1}+c_2|w(t)|^{q+1}_{q+1}.
\end{align}

Since $G(t)=-\mathcal{E}(t)$, the energy identity \eqref{energy-2} can be written as
$$
G(t)=G(0)+\int^t_0\int_\Omega g_1(u_t)u_tdxd\tau+\int^t_0\int_\Gamma
g_2(w_t)w_td\Gamma d\tau.
$$
Then from Assumption \ref{ass} and the regularity of $(u,w)$, we
infer that $G(t)$ is absolutely continuous, and
\begin{align}\label{6-6}
G'(t)&=\int_\Omega g_1(u_t)u_tdx+\int_\Gamma
g_2(w_t)w_td\Gamma \geq\alpha\|u_t(t)\|^{m+1}_{m+1}+\alpha|w_t(t)|^{r+1}_{r+1}\geq 0,
\end{align}
a.e. on $[0,T)$.
Then $G(t)$ is non-decreasing. In view of $G(0)=-\mathcal{E}(0)>0$,
we obtain that for any $0\leq t<T$,
\begin{align}\label{6-7}
0<G(0)\leq G(t)\leq S(t).
\end{align}
Due to \eqref{6-1} and (\ref{6-7}), we obtain
\begin{align}\label{6-7-1}
\|u_t(t)\|^2_2+|w_t(t)|^2_2+\|\nabla u(t)\|^2_2+\|u(t)\|^2_2+|\Delta w(t)|^2_2<2S(t).
\end{align}

We introduce a constant $a$ satisfying
\begin{align}\label{6-4}
0<a<\min\left\{\frac{1}{m+1}-\frac{1}{p+1},
\;\;\frac{1}{r+1}-\frac{1}{q+1},
\;\;\frac{p-1}{2(p+1)},
\;\;\frac{q-1}{2(q+1)}\right\}.
\end{align}

Define
\begin{align} \label{def-Y}
Y(t):=G^{1-a}(t)+\varepsilon N'(t),
\end{align}
where $0<\varepsilon\leq\min\{1,G(0)\}$ will be determined later. 
The function $Y(t)$ is adopted from the important work \cite{GT} by Georgiev and Todorova.

We aim to show that $Y(t)$ approaches infinity in finite time.

First we claim that
\begin{align}\label{6-8}
Y'(t)=(1-a)G^{-a}(t)G'(t)+\varepsilon N''(t),
\end{align}
where
\begin{align}\label{6-9}
N''(t)&=\|u_t(t)\|^2_2+|w_t(t)|^2_2-(\|\nabla
u(t)\|^2_2+|\Delta w(t)|^2_2+\|u(t)\|^2_2)-\int_\Omega
g_1(u_t(t))u(t)dx\nonumber\\
&\quad-\int_\Gamma g_2(w_t(t))w(t)d\Gamma+\int_\Omega
u(t)f(u(t))dx+\int_\Gamma
w(t)h(w(t))d\Gamma\nonumber\\
&\quad+2\int_\Gamma\gamma u(t)\cdot w_t(t)d\Gamma,\ \ \mbox{a.e.}\
\mbox{on}\ [0,T).
\end{align}

We remark that $N''(t)$ can be obtained \emph{formally} by differentiating $N'(t)$ in (\ref{6-2}) and using equations in (\ref{PDE}). But this formal procedure needs to be justified as follows.

By Definition \ref{def:weaksln}, $u_t \in
L^{m+1}(\Omega\times(0,T))$. 
Since $u_0\in H_{\Gamma_0}^1(\Omega) \hookrightarrow L^6(\Omega)$, then $u_0\in L^{m+1}(\Omega)$ for $1\leq m <5$ by referring to Remark \ref{rmk-p}. Then
we have
\begin{align}\label{6-9-1}
\int^T_0\int_\Omega|u|^{m+1}dxdt&= \int^T_0\int_\Omega\left|\int^t_0u_t(\tau)d\tau+u_0\right|^{m+1}dxdt\nonumber\\
&\leq C(T^{m+1}\|u_t(t)\|^{m+1}_{L^{m+1}(\Omega\times(0,T))}+T\|u_0\|^{m+1}_{m+1})<\infty.
\end{align}
This implies $u(t)\in L^{m+1}(\Omega\times(0,T))$ for all $T\geq0$.
We can use the same argument to obtain  $w(t)\in
L^{r+1}(\Gamma\times(0,T))$. Then $u(t)$ and $w(t)$ enjoy the regularity
restrictions imposed on the test functions $\phi(t)$ and $\psi(t)$,
respectively, in Definition \ref{def:weaksln}. Then we can replace $\phi$
by $u$ in \eqref{wkslnwave}, $\psi$ by $w$ in \eqref{wkslnplt} and use
\eqref{6-2} to obtain 
\begin{align}\label{6-10}
N'(t)&=(u,u_t)_\Omega+(w,w_t)_\Gamma+(\gamma
u,w)_\Gamma\nonumber\\
&=\int_\Omega u_0u_1dx+\int_\Gamma(w_0w_1+\gamma
u_0w_0)d\Gamma+\int^t_0(\|u_t\|^2_2+|w_t|^2_2)d\tau\nonumber\\
&\quad-\int^t_0(\|\nabla u\|^2_2+|\Delta
w|^2_2+\|u\|^2_2)d\tau+2\int^t_0\int_\Gamma\gamma u\cdot w_td\Gamma
d\tau-\int^t_0\int_\Omega g_1(u_t)udxd\tau\nonumber\\
&\quad-\int^t_0\int_\Gamma g_2(w_t)wd\Gamma d\tau+\int^t_0\int_\Omega
uf(u)dxd\tau+\int^t_0\int_\Gamma wh(w)d\Gamma d\tau.
\end{align}

In the following, we show that $N'(t)$ is absolutely continuous, and therefore it can be differentiated.

Recall the fact $u\in C([0,t];H^1_{\Gamma_0}(\Omega))$ and the embedding
$H^1_{\Gamma_0}(\Omega)\hookrightarrow L^6(\Omega)$. By Remark \ref{rmk-p}, we know $1<p<5$. Hence, for all $t\in[0,T)$,
\begin{align}\label{6-11}
\int^t_0\left|\int_\Omega
uf(u)dx\right|d\tau&\leq C\int^t_0\int_\Omega(|u|^p+1)|u|dxd\tau<\infty.
\end{align}
Also, since $w\in H^2_0(\Gamma) \hookrightarrow L^{\infty}(\Gamma)$, we have
\begin{align}\label{6-12}
\int^t_0\left|\int_\Gamma wh(w)d\Gamma\right|d\tau\leq
C_T\int^t_0\int_\Gamma|w|^{q+1}d\Gamma d\tau<\infty.
\end{align}

By using the trace theorem, we see that
\begin{align}\label{6-13}
2\int^t_0\left|\int_\Gamma\gamma u\cdot
w_td\Gamma\right|d\tau \leq C\int^t_0 \|\nabla
u\|^2_2 d\tau+\int^t_0 |w_t|^2_2 d\tau<\infty.
\end{align}

Because of (\ref{6-9-1}) 
and the regularity $u_t \in L^{m+1}(\Omega \times (0,T))$, we deduce that for all $t\in[0,T)$,
\begin{align}\label{6-14}
\int^t_0\left|\int_\Omega
g_1(u_t)udx\right|d\tau+\int^t_0\left|\int_\Gamma
g_2(w_t)wd\Gamma\right|d\tau<\infty.
\end{align}
Then \eqref{6-11}-\eqref{6-14} and the regularity of $(u,w)$
imply that all terms on the right-hand side of (\ref{6-10}) are absolutely continuous, 
and thus we can differentiate (\ref{6-10}) to conclude that the claimed formula \eqref{6-9} for $N''(t)$ holds true.

In the following, we aim to find a lower bound for $N''(t)$.

By using Young's inequality, we see that
\begin{align}\label{6-14-1}
2\left|\int_\Gamma\gamma u\cdot
w_t \, d\Gamma \right|  \leq 2|w_t|^2_2+\frac{1}{2}|\gamma u|^2_2.
\end{align}
Now we estimate the term $|\gamma u|^2_2$.
Without loss of generality, we assume the flat portion $\Gamma$ of the boundary is horizontal, and thus the unit normal vector to $\Gamma$ is $\mathbf {n}=(0,0,1)$.
Recall that $\partial\O=\overline{\G_0\cup\G}$ and $u|_{\Gamma_0}=0$. We define a vector field $\mathbf {F}=(0,0,u^2)$ and use the Divergence Theorem to get that
\begin{align}\label{6-14-2}
|\gamma u|^2_2&=\int_\Gamma u^2 d\Gamma=\int_{\Gamma\cup\Gamma_0}u^2d(\Gamma\cup\Gamma_0) =\int_{\Gamma\cup\Gamma_0}\mathbf{F}\cdot\mathbf{n} \,d(\Gamma\cup\Gamma_0) =\int_\Omega \mbox{div}\, \mathbf{F} \, dx\notag\\
&=\int_\Omega (u^2)_zdx=2\int_\Omega uu_zdx\leq \|u\|^2_2+\|u_z\|^2_2\leq \|u\|^2_2+\|\nabla u\|^2_2.
\end{align}
It follows from \eqref{6-14-1} and \eqref{6-14-2} that 
\begin{align}\label{6-14-3}
2 \left|\int_\Gamma\gamma u\cdot
w_td\Gamma \right|  \leq 2|w_t|^2_2+\frac{1}{2}\|u\|^2_2+\frac{1}{2}\|\nabla u\|^2_2.
\end{align}
Then  \eqref{6-9} and \eqref{6-14-3} yield 
\begin{align}\label{e1}
N''(t) &\geq \|u_t\|^2_2-|w_t|^2_2-\frac{3}{2}(\|\nabla u\|^2_2+|\Delta w|^2_2+\|u\|^2_2)-\int_\Omega
g_1(u_t)udx\nonumber\\
&\quad-\int_\Gamma g_2(w_t)wd\Gamma+\int_\Omega
uf(u)dx+\int_\Gamma
wh(w)d\Gamma.
\end{align}
Noting $\|\nabla u\|^2_2+|\Delta
w|^2_2+\|u\|^2_2=-(\|u_t\|^2_2+|w_t|^2_2)+2S(t)-2G(t)$ due to (\ref{6-1}),
and using the assumption $uf(u)\geq
c_1F(u)$, $wh(w)\geq c_3H(w)$ from (\ref{af1})-(\ref{af2}), we infer from \eqref{e1} that
\begin{align}\label{e2}
N''(t)&\geq \frac{5}{2}\|u_t\|^2_2+\frac{1}{2}|w_t|^2_2-3S(t)+3G(t)-\int_\Omega
g_1(u_t)udx-\int_\Gamma g_2(w_t)wd\Gamma\nonumber\\
&\quad+c_1\int_\Omega
F(u)dx+c_3\int_\Gamma
H(w)d\Gamma\nonumber\\
&\geq \frac{5}{2}\|u_t\|^2_2+\frac{1}{2}|w_t|^2_2+3G(t)-\int_\Omega
g_1(u_t)udx-\int_\Gamma g_2(w_t)wd\Gamma + \lambda S(t),
\end{align}
where we let $\lambda:=\min\{c_1-3,c_3-3\}>0$.

By using $g_1(s)s\leq\beta|s|^{m+1}$, H\"{o}lder's inequality and
$p>m$, we have
\begin{align}
\int_\Omega
g_1(u_t)udx&\leq \beta\int_\Omega|u_t|^m|u|dx\leq\beta\|u\|_{m+1}\|u_t\|^m_{m+1} \leq \beta|\Omega|^{\frac{p-m}{(p+1)(m+1)}}\|u\|_{p+1}\|u_t\|^m_{m+1},\nonumber
\end{align}
which along with \eqref{6-3} yields
\begin{align}\label{6-17}
\int_\Omega
g_1(u_t)udx\leq\beta|\Omega|^{\frac{p-m}{(p+1)(m+1)}}c_0^{-\frac{1}{p+1}}S^{\frac{1}{p+1}}(t)\|u_t\|^m_{m+1}=R_1S^{\frac{1}{p+1}}(t)\|u_t\|^m_{m+1},
\end{align}
where the constant $R_1:=\beta |\Omega|^{\frac{p-m}{(p+1)(m+1)}}c_0^{-\frac{1}{p+1}}$.

Then by using Young's
inequality, \eqref{6-6} and \eqref{6-7}, we obtain from \eqref{6-17}
that for any $\delta_1>0$,
\begin{align}\label{6-18}
\int_\Omega
g_1(u_t)udx&\leq R_1S^{\frac{1}{p+1}-\frac{1}{m+1}}(t)S^{\frac{1}{m+1}}(t)\|u_t\|^m_{m+1}\nonumber\\
&\leq G^{\frac{1}{p+1}-\frac{1}{m+1}}(t)\Big[\delta_1S(t)+C_{\delta_1}R_1^{\frac{m+1}{m}}\|u_t\|^{m+1}_{m+1}\Big]\nonumber\\
&\leq \delta_1G^{\frac{1}{p+1}-\frac{1}{m+1}}(t)S(t)+C_{\delta_1}\frac{R_1^{\frac{m+1}{m}}}{\alpha}G'(t)G^{-a}(t)G^{a+\frac{1}{p+1}-\frac{1}{m+1}}(t)\nonumber\\
&\leq \delta_1G^{\frac{1}{p+1}-\frac{1}{m+1}}(0)S(t)+C_{\delta_1}\frac{R_1^{\frac{m+1}{m}}}{\alpha}G'(t)G^{-a}(t)G^{a+\frac{1}{p+1}-\frac{1}{m+1}}(0),
\end{align}
where $a>0$ satisfying (\ref{6-4}), and thus $a+\frac{1}{p+1}-\frac{1}{m+1}<0$. Similarly,
we can obtain for any $\delta_2>0$,
\begin{align}\label{6-19}
\int_\Gamma g_2(w_t)wd\Gamma\leq
\delta_2G^{\frac{1}{q+1}-\frac{1}{r+1}}(0)S(t)+C_{\delta_2}\frac{R_2^{\frac{r+1}{r}}}{\alpha}G'(t)G^{-a}(t)G^{a+\frac{1}{q+1}-\frac{1}{r+1}}(0),
\end{align}
where $R_2:=\beta|\Gamma|^{\frac{q-r}{(q+1)(r+1)}}c_2^{-\frac{1}{q+1}}$.

Inserting \eqref{6-18} and \eqref{6-19} into \eqref{e2}, we obtain
\begin{align}\label{e3}
N''(t)&\geq \frac{5}{2}\|u_t\|^2_2+\frac{1}{2}|w_t|^2_2+3G(t) 
+\left[\lambda - \delta_1G^{\frac{1}{p+1}-\frac{1}{m+1}}(0)-\delta_2G^{\frac{1}{q+1}-\frac{1}{r+1}}(0)\right]S(t)\nonumber\\
&\quad-\left[C_{\delta_1}\frac{R_1^{\frac{m+1}{m}}}{\alpha}G^{a+\frac{1}{p+1}-\frac{1}{m+1}}(0)+
C_{\delta_2}\frac{R_2^{\frac{r+1}{r}}}{\alpha}G^{a+\frac{1}{q+1}-\frac{1}{r+1}}(0)\right]G'(t)G^{-a}(t).
\end{align}

Let us introduce the constants
$\delta_1=\displaystyle\frac{\lambda}{4}G^{\frac{1}{m+1}-\frac{1}{p+1}}(0)$ and 
$\delta_2=\displaystyle\frac{\lambda}{4}G^{\frac{1}{r+1}-\frac{1}{q+1}}(0)$. 
Consequently, we infer from \eqref{6-8} and \eqref{e3} that
\begin{align}\label{6-20}
Y'(t) &\geq \left[(1-a)-\varepsilon
C_{\delta_1}\frac{R_1^{\frac{m+1}{m}}}{\alpha}G^{a+\frac{1}{p+1}-\frac{1}{m+1}}(0)-\varepsilon
C_{\delta_2}\frac{R_2^{\frac{r+1}{r}}}{\alpha}G^{a+\frac{1}{q+1}-\frac{1}{r+1}}(0)\right]G'(t)G^{-a}(t)\nonumber\\
&\;\;\;\;+\frac{5}{2}\varepsilon\|u_t\|^2_2+\frac{1}{2}\varepsilon|w_t|^2_2+3\varepsilon G(t)+ \frac{\lambda}{2}  \varepsilon  S(t).
\end{align}

Noting that $0<a<\frac{1}{2}$, we take $0<\varepsilon<1$
sufficiently small such that
$$
\rho:=(1-a)-\varepsilon
C_{\delta_1}\frac{R_1^{\frac{m+1}{m}}}{\alpha}G^{a+\frac{1}{p+1}-\frac{1}{m+1}}(0)-\varepsilon
C_{\delta_2}\frac{R_2^{\frac{r+1}{r}}}{\alpha}G^{a+\frac{1}{q+1}-\frac{1}{r+1}}(0)\geq0,
$$
 to obtain from \eqref{6-20}  that
\begin{align}\label{6-22}
Y'(t)&\geq \rho
G'(t)G^{-a}(t)+\frac{5}{2}\varepsilon\|u_t\|^2_2+\frac{1}{2}\varepsilon|w_t|^2_2+3\varepsilon G(t)+\frac{\lambda}{2}\varepsilon S(t) >0.
\end{align}
This shows that $Y(t)$ is increasing on $[0,T)$, with
$$
Y(t)=G^{1-a}(t)+\varepsilon N'(t) > Y(0) = G^{1-a}(0)+\varepsilon N'(0).
$$
If $N'(0)\geq0$, then we do not need any  further condition on
$\varepsilon$. But, if $N'(0)<0$, we further take $\varepsilon$ such
that $0<\varepsilon\leq-\frac{G^{1-a}(0)}{2N'(0)}$. In any case, we
have
\begin{align}\label{6-23}
Y(t)\geq \frac{1}{2}G^{1-a}(0)>0,\ \ \mbox{for}\ t\in[0,T).
\end{align}

Finally, we shall prove that the following inequality holds:
\begin{align}\label{6-24}
Y'(t)\geq C\varepsilon^{1+\sigma}Y^\mu(t),\ \ \mbox{for}\ t\in[0,T),
\end{align}
where $C>0$ is a generic constant independent of $\varepsilon$, and
$$
1<\mu=\frac{1}{1-a}<2,\ \ \sigma=\max\{\sigma_1,\sigma_2\}>0,
$$
and
$$
\sigma_1=1-\frac{2}{(1-2a)(p+1)}>0,\ \
\sigma_2=1-\frac{2}{(1-2a)(q+1)}>0,
$$
due to (\ref{6-4}).

Indeed, if $N'(t)\leq 0$ for some $t\in[0,T)$, then for such value
of $t$, we get
\begin{align}\label{6-25}
Y^\mu(t)=[G^{1-a}(t)+\varepsilon N'(t)]^\mu\leq G(t).
\end{align}
Then we infer from \eqref{6-22} and \eqref{6-25} that
$$
Y'(t)\geq 3\varepsilon G(t)\geq 3\varepsilon^{1+\sigma}G(t)\geq
3\varepsilon^{1+\sigma}Y^{\mu}(t).
$$
If $N'(t)> 0$ for some $t\in[0,T)$, we first note that
$Y(t)=G^{1-a}(t)+\varepsilon N'(t)\leq G^{1-a}(t)+N'(t)$, then
\begin{align}\label{6-26}
Y^\mu(t)\leq C \Big[G(t)+[N'(t)]^\mu\Big].
\end{align}
Applying H\"{o}lder's inequality, Young's inequality, trace theorem
and using $1<\mu<2$, we conclude from (\ref{6-2}) that
\begin{align}\label{6-27}
[N'(t)]^\mu&\leq \Big(\|u_t\|_2\|u\|_2+|w_t|_2|w|_2+|\gamma
u|_2|w|_2\Big)^\mu\nonumber\\
&\leq C\Big(\|u_t\|^\mu_2\|u\|^\mu_2+|w_t|^\mu_2|w|^\mu_2+|\gamma
u|^\mu_2|w|^\mu_2\Big)\nonumber\\
&\leq C\Big(\|u_t\|^2_2+\|u\|^{\frac{2\mu}{2-\mu}}_{p+1}+|w_t|^2_2+|w|^{\frac{2\mu}{2-\mu}}_{q+1}+\|\nabla
u\|^2_2+|w|^{\frac{2\mu}{2-\mu}}_{q+1}\Big).
\end{align}
Since $\mu=\frac{1}{1-a}$ and $\sigma_1>0$, it follows that
\begin{align}\label{6-28}
\frac{2\mu}{(2-\mu)(p+1)}-1=\frac{2}{(1-2a)(p+1)}-1=-\sigma_1<0.
\end{align}
Noting $\varepsilon\leq G(0)$, we infer from \eqref{6-3},
\eqref{6-7} and \eqref{6-28} that
\begin{align}\label{6-29}
\|u(t)\|^{\frac{2\mu}{2-\mu}}_{p+1}&= (\|u(t)\|^{p+1}_{p+1})^{\frac{2\mu}{(2-\mu)(p+1)}}\leq
CS(t)^{\frac{2\mu}{(2-\mu)(p+1)}}\nonumber\\
&\leq CS(t)^{\frac{2\mu}{(2-\mu)(p+1)}-1} S(t) \leq
CG^{-\sigma_1}(0)S(t)\leq C\varepsilon^{-\sigma_1}S(t).
\end{align}
In the same way, we have
\begin{align}\label{6-30}
|w(t)|^{\frac{2\mu}{2-\mu}}_{q+1}\leq
C\varepsilon^{-\sigma_2}S(t).
\end{align}
Recall $\sigma=\max\{\sigma_1,\sigma_2\}>0$ and
$\varepsilon^{-\sigma}>1$. By substituting \eqref{6-29} and \eqref{6-30}
into \eqref{6-27}, we get
\begin{align}\label{6-31}
[N'(t)]^\mu&\leq C\Big(\|u_t\|^2_2+|w_t|^2_2+\|\nabla
u\|^2_2+\varepsilon^{-\sigma}S(t)\Big)\nonumber\\
&\leq  C\Big(\|u_t\|^2_2+|w_t|^2_2+S(t)+\varepsilon^{-\sigma}S(t)\Big)\nonumber\\
&\leq C\varepsilon^{-\sigma}\Big(\|u_t\|^2_2+|w_t|^2_2+S(t)\Big),
\end{align}
where \eqref{6-7-1} is used. 
Combining \eqref{6-22}, \eqref{6-26} and \eqref{6-31}, we derive
that
\begin{align}
Y'(t)&\geq  C\varepsilon\Big[G(t)+\|u_t\|^2_2+|w_t|^2_2 +S(t)\Big]\nonumber\\
&\geq C\varepsilon\Big[G(t)+\varepsilon^\sigma[N'(t)]^\mu\Big]\nonumber\\
&\geq C\varepsilon^{1+\sigma}\Big[G(t)+[N'(t)]^\mu\Big]\geq
C\varepsilon^{1+\sigma}Y^\mu(t),\nonumber
\end{align}
for all values of $t\in[0,T)$ for which $N'(t)>0$. Then in any case,
\eqref{6-24} holds true.

It follows  from \eqref{6-23} and \eqref{6-24} that the maximum life span $T$ is
necessarily finite with
\begin{align}\label{6-32}
T<C\varepsilon^{-(1+\sigma)}Y^{-\frac{a}{1-a}}(0)\leq
C\varepsilon^{-(1+\sigma)}G^{-a}(0).
\end{align}
Notice that, at the blow-up time $T$, the quadratic energy must approache infinity:
\begin{align}\label{6-33}
\limsup_{t\rightarrow T^-} E(t) = +\infty.
\end{align}
We claim 
\begin{align} \label{6-34}
\limsup_{t\rightarrow T^-} (\|\nabla u(t)\|_2^2 + |\Delta w(t)|_2^2) = +\infty.
\end{align}
In fact, (\ref{6-1}) shows that
\begin{align}  \label{6-35}
E(t) = - G(t) + S(t)  < S(t),
\end{align}
because $G(t)>0$ on $[0,T)$. It follows from (\ref{6-34})-(\ref{6-35}) that
\begin{align} \label{6-36}
\limsup_{t\rightarrow T^-} S(t) = + \infty.
\end{align}
Recall $p<5$ from Remark \ref{rmk-p}, then we have 
\begin{align*}
\|\nabla u(t)\|_2^2 + |\Delta w(t)|_2^2
\geq C (\|u\|_{p+1}^{p+1} + |w|_{q+1}^{q+1}) \geq C S(t),
\end{align*}
and along with (\ref{6-36}), we obtain (\ref{6-34}). The proof is completed.
\end{proof}

\vspace{0.1 in}

\section{Blow-up of solutions with positive initial energy}\label{blowup2}
This section is devoted to proving Theorem \ref{thm6-2} and its corollary. 
These results state that the weak solution of system (\ref{PDE}) blows up in finite time if the source terms dominate the damping terms, and the initial total energy $\mathcal E(0)$ is positive but sufficiently small, 
and the initial quadratic energy $E(0)$ is sufficiently large. The basic idea comes from the potential well theory.

\subsection{Proof of Theorem \ref{thm6-2}}
\begin{proof}[Proof of Theorem \ref{thm6-2}]
We use some ideas from \cite{GT,GRS2,V4}. We define the life span $T$ of such a
solution $(u(t),w(t))$ to be the supremum of all $T^*>0$ such that
$(u(t),w(t))$ is a solution to system \eqref{PDE} in the sense of
Definition \ref{def:weaksln} on $[0, T^*]$.

By using \eqref{3-4} and \eqref{3-12}, we have that for $t\in [0,T),$
\begin{align}\label{b8}
\mathcal{E}(t)&=E(t)-\int_\Omega F(u)dx-\int_\Gamma H(w)d\Gamma\nonumber\\
&\geq E(t)-M\|u(t)\|^{p+1}_{p+1}-M|w(t)|^{q+1}_{q+1}\nonumber\\
&\geq E(t)-MK_1\|\nabla u\|^{p+1}_2-MK_2|\Delta w|^{q+1}_2\nonumber\\
&\geq E(t)-MK_1(2E(t))^{\frac{p+1}{2}}-MK_2(2E(t))^{\frac{q+1}{2}},\ \ \mbox{for}\ \mbox{all}\ t\in[0,T).
\end{align}

We define the function $F_1:\mathbb{R}^+\to\mathbb{R}$ by
\begin{align}\label{b1}
F_1(y):=y-MK_1(2y)^{\frac{p+1}{2}}-MK_2(2y)^{\frac{q+1}{2}},
\end{align}
where the positive constants $K_1,K_2$ were given in  \eqref{3-12} and $M>0$ was introduced in \eqref{3-4}.
Then \eqref{b8} is equivalent to the following form:
\begin{align}\label{b9}
\mathcal{E}(t)\geq F_1(E(t)),\ \ \forall\ t\in [0,T).
\end{align}
In view of $p,q>1$, we see that $F_1(y)$ is continuously differentiable, concave and
has its maximum at $y=y_0>0$, where $y_0$ satisfies
\begin{align}\label{b2}
MK_1(p+1)(2y_0)^{\frac{p-1}{2}}+MK_2(q+1)(2y_0)^{\frac{q-1}{2}}=1.
\end{align}
We define
\begin{align}\label{b3}
\hat{d}:=\sup_{[0,\infty)}F_1(y)=F_1(y_0)=y_0-MK_1(2y_0)^{\frac{p+1}{2}}-MK_2(2y_0)^{\frac{q+1}{2}}. 
\end{align}

Since the function $F_1(y)$ has its maximum value at $y=y_0$, then $F_1(y)$ is decreasing if $y>y_0$. As $0\leq\mathcal{E}(0)<\hat{d}=F_1(y_0)$, then there exists a unique constant $y_1$ such that
\begin{align}\label{b10}
F_1(y_1)=\mathcal{E}(0),\ \ \mbox{with}\ y_1>y_0>0.
\end{align}
Then it follows from \eqref{b9} that
\begin{align}\label{b11}
\hat{d}=F_1(y_0)>F_1(y_1)=\mathcal{E}(0)\geq \mathcal{E}(t)\geq F_1(E(t)),\ \ \forall\ t\in [0,T).
\end{align}
Note that $F_1(y)$ is continuous and decreasing if $y>y_0$, and $E(t)$ is also continuous. Since we assume $E(0)>y_0$, we infer from \eqref{b11} that
\begin{align}\label{b12}
E(t)\geq y_1>y_0,\ \ \forall\ t\in[0,T).
\end{align} 
As in Section \ref{blowup1}, we define 
$$
N(t):=\frac{1}{2}\left(\|u(t)\|^2_2+|w(t)|^2_2\right)+\int^t_0\int_\Gamma\gamma u(\tau)\cdot
w(\tau)d\Gamma d\tau,
$$
and
\begin{align} \label{SS}
S(t):=\int_\Omega F(u(t))dx+\int_\Gamma H(w(t))d\Gamma.
\end{align}
Let us define 
\begin{align}    \label{defGG}
\mathcal G(t) := A-\mathcal{E}(t),
\end{align}
where the constant $A$ has been introduced in (\ref{defA}).

Due to the energy identity (\ref{energy-2}), we know $\mathcal E'(t)\leq 0$,
and thus $\mathcal G'(t)\geq 0$, i.e., $\mathcal G(t)$ is non-decreasing in time. 
Since we assume $\mathcal E(0)<A$, then $\mathcal G(0) := A-\mathcal{E}(0)>0$. Therefore, we have
\begin{align} \label{GG0}
\mathcal G(t) \geq  \mathcal G(0) >0,  \;\;\text{for all} \;\; t\in [0,T).
\end{align}

We consider the function
\begin{align}\label{b14}
Y(t):=\mathcal G^{1-a}(t)+\varepsilon N'(t),
\end{align}
for some $a\in (0,\frac{1}{2})$ satisfying (\ref{6-4})
and $\varepsilon>0$. We plan to show that $Y(t)$ approaches infinity in finite time, by choosing $\varepsilon$ sufficiently small. 
Adopting the same arguments as \eqref{6-8}, we see that
\begin{align}\label{b15}
Y'(t)=(1-a)\mathcal G^{-a}(t)\mathcal G'(t)+\varepsilon N''(t),
\end{align}
where
\begin{align}\label{b16}
N''(t)&=\|u_t(t)\|^2_2+|w_t(t)|^2_2-(\|\nabla
u(t)\|^2_2+\|u(t)\|_2^2 + |\Delta w(t)|^2_2)-\int_\Omega
g_1(u_t(t))u(t)dx\nonumber\\
&\quad-\int_\Gamma g_2(w_t(t))w(t)d\Gamma+\int_\Omega
u(t)f(u(t))dx+\int_\Gamma
w(t)h(w(t))d\Gamma\nonumber\\
&\quad+2\int_\Gamma\gamma u(t)\cdot w_t(t)d\Gamma,\ \ \mbox{a.e.}\
\mbox{on}\ [0,T).
\end{align}
Following the same estimates as in \eqref{6-9-1}-\eqref{e1}, we obtain 
\begin{align}\label{b16-1}
N''(t) &\geq \|u_t\|^2_2-|w_t|^2_2-\frac{3}{2}(\|\nabla u\|^2_2+|\Delta w|^2_2+\|u\|^2_2)-\int_\Omega
g_1(u_t)udx\nonumber\\
&\quad-\int_\Gamma g_2(w_t)wd\Gamma+\int_\Omega
uf(u)dx+\int_\Gamma
wh(w)d\Gamma.
\end{align}
Since 
$$
\|\nabla u\|^2_2+|\Delta w|^2_2+\|u\|^2_2=2A-\|u_t\|^2_2-|w_t|^2_2+2S(t)-2\mathcal G(t),
$$
and noting $uf(u)\geq c_1F(u)$, $wh(w)\geq c_3H(w)$,
then we conclude from \eqref{b16-1} that
\begin{align*}
N''(t)&\geq\frac{5}{2}\|u_t\|^2_2+\frac{1}{2}|w_t|^2_2-3A+3\mathcal G(t)+\lambda S(t)-\int_\Omega g_1(u_t)udx-\int_\Gamma g_2(w_t)wd\Gamma,
\end{align*}
where $\lambda:=\min\{c_1-3,c_3-3\}>0$.  Because of (\ref{SS}), (\ref{defGG}) and (\ref{3-7}), we have
$$
S(t)=\mathcal G(t)-A+E(t).
$$
Then
\begin{align*}
N''(t)&\geq\frac{5}{2}\|u_t\|^2_2+\frac{1}{2}|w_t|^2_2-\left(3+\frac{\lambda}{2}\right)A+\left(3+\frac{\lambda}{2}\right)\mathcal G(t)+\frac{\lambda}{2}E(t)+\frac{\lambda}{2} S(t)\nonumber\\
&\quad-\int_\Omega g_1(u_t)udx-\int_\Gamma g_2(w_t)wd\Gamma,\ \ \mbox{for}\ t\in[0,T).
\end{align*}
By recalling (\ref{b12}) and (\ref{defA}), one has
$$
\frac{\lambda}{4}E(t) >  \frac{\lambda}{4}y_0 = \left(3+\frac{\lambda}{2}\right)A,
\;\;\text{for all}\;\; t\in [0,T).
$$
It follows that
\begin{align}\label{b26}
N''(t)&\geq\frac{5}{2}\|u_t\|^2_2+\frac{1}{2}|w_t|^2_2+\left(3+\frac{\lambda}{2}\right)\mathcal G(t)+\frac{\lambda}{4}E(t)+\frac{\lambda}{2} S(t)\nonumber\\
&\quad-\int_\Omega g_1(u_t)udx-\int_\Gamma g_2(w_t)wd\Gamma,\ \ \mbox{for}\ t\in[0,T).
\end{align}

Recalling $\lambda=\min\{c_1-3,c_3-3\}>0$, we have $A=\frac{\lambda}{12+2\lambda}y_0<y_0$.
Then it is concluded from \eqref{b12} that  
\begin{align} \label{b27}
\mathcal G(t)&=A-\mathcal{E}(t)=A-E(t)+S(t) < y_0-y_1+S(t) <S(t),
\end{align}
for $t\in [0,T)$. Moreover, we infer from (\ref{defGG}) and the energy inequality (\ref{energy-2}) that
\begin{align} \label{b27'}
\mathcal G'(t) = - \mathcal E'(t) 
= \int_{\Omega} g_1(u_t) u_t dx + \int_{\Gamma} g_2(w_t) w_t d\Gamma 
\geq \alpha \|u_t\|_{m+1}^{m+1} + \alpha |w_t|_{r+1}^{r+1} \geq 0,
\end{align}
for all $t\in [0,T)$. Then, we use the same arguments as in \eqref{6-18} and \eqref{6-19} to obtain from \eqref{b27}-(\ref{b27'}) that 
\begin{align}
&\int_\Omega g_1(u_t)udx\leq \delta_1 \mathcal G^{\frac{1}{p+1}-\frac{1}{m+1}}(0)S(t)+C_{\delta_1}\frac{R_1^{\frac{m+1}{m}}}{\alpha}\mathcal G'(t) \mathcal G^{-a}(t)\mathcal G^{a+\frac{1}{p+1}-\frac{1}{m+1}}(0),   \label{b28} \\
&\int_\Gamma g_2(w_t)wd\Gamma\leq
\delta_2 \mathcal G^{\frac{1}{q+1}-\frac{1}{r+1}}(0)S(t)+C_{\delta_2}\frac{R_2^{\frac{r+1}{r}}}{\alpha}\mathcal G'(t) \mathcal G^{-a}(t) \mathcal G^{a+\frac{1}{q+1}-\frac{1}{r+1}}(0),   \label{b29}
\end{align}
for any $\delta_1, \delta_2>0$, where the constant $a$ satisfies (\ref{6-4}).

Substituting \eqref{b26}, \eqref{b28} and \eqref{b29} into \eqref{b15}, we obtain
\begin{align}\label{b30}
Y'(t)&=(1-a)\mathcal G^{-a}(t) \mathcal G'(t)+\varepsilon N''(t)\nonumber\\
&\geq \left[(1-a)-\varepsilon C_{\delta_1}\frac{R_1^{\frac{m+1}{m}}}{\alpha} \mathcal G^{a+\frac{1}{p+1}-\frac{1}{m+1}}(0)-\varepsilon C_{\delta_2}\frac{R_2^{\frac{r+1}{r}}}{\alpha} \mathcal G^{a+\frac{1}{q+1}-\frac{1}{r+1}}(0)\right]\mathcal G^{-a}(t) \mathcal G'(t)\nonumber\\
&\quad+\varepsilon\left[\frac{5}{2}\|u_t\|^2_2+\frac{1}{2}|w_t|^2_2+\left(3+\frac{\lambda}{2}\right)\mathcal G(t)+\frac{\lambda}{4}E(t)\right]\nonumber\\
&\quad+\varepsilon\left[\frac{\lambda}{2}-\delta_1 \mathcal G^{\frac{1}{p+1}-\frac{1}{m+1}}(0)-\delta_2 \mathcal G^{\frac{1}{q+1}-\frac{1}{r+1}}(0)
\right]S(t).
\end{align}
At this point, we select $\delta_1,\delta_2>0$ such that
$$
\frac{\lambda}{2}-\delta_1 \mathcal G^{\frac{1}{p+1}-\frac{1}{m+1}}(0)-\delta_2 \mathcal G^{\frac{1}{q+1}-\frac{1}{r+1}}(0)\geq\frac{\lambda}{4}.
$$
For these fixed values of $\delta_1,\delta_2>0$, we choose $\varepsilon>0$ sufficiently small that
$$
(1-a)-\varepsilon C_{\delta_1}\frac{R_1^{\frac{m+1}{m}}}{\alpha} \mathcal G^{a+\frac{1}{p+1}-\frac{1}{m+1}}(0)-\varepsilon C_{\delta_2}\frac{R_2^{\frac{r+1}{r}}}{\alpha} \mathcal G^{a+\frac{1}{q+1}-\frac{1}{r+1}}(0)\geq \frac{1}{2}(1-a).
$$
Then, from \eqref{b30} we obtain
\begin{align}\label{b31}
Y'(t)&\geq \varepsilon \left[\frac{5}{2}\|u_t\|^2_2+\frac{1}{2}|w_t|^2_2+\left(3+\frac{\lambda}{2}\right)\mathcal G(t)+\frac{\lambda}{4}E(t)\right]+\frac{\lambda}{4}\varepsilon S(t) >0,
\end{align}
for all $t\in [0,T)$. Therefore, $Y(t)$ is increasing on $[0,T)$, with
$$
Y(t)=\mathcal G^{1-a}(t)+\varepsilon N'(t) > Y(0) = \mathcal G^{1-a}(0)+\varepsilon N'(0).
$$
Similar to \eqref{6-23}, one can choose $\varepsilon$ sufficiently small such that 
\begin{align}\label{b33}
Y(t) \geq \frac{1}{2} \mathcal G^{1-a}(0)>0,\ \ \mbox{for}\ t\in[0,T).
\end{align}
Now, we claim
\begin{align}\label{nODE}
Y'(t)\geq C\varepsilon^{1+\sigma}Y^\mu(t),\ \ \mbox{for}\ t\in[0,T),
\end{align}
where $\mu:=\frac{1}{1-a}\in (1,2)$ and 
$\sigma:=\max\{\sigma_1,\sigma_2\}>0$ with $\sigma_1=1-\frac{2}{(1-2a)(p+1)}>0$ and $\sigma_2=1-\frac{2}{(1-2a)(q+1)}>0$. By solving differential inequality (\ref{nODE}) with (\ref{b33}), we deduce that 
the maximum life span $T$ is necessarily finite with
\begin{align*}
T<C\varepsilon^{-(1+\sigma)}Y^{-\frac{a}{1-a}}(0)\leq
C\varepsilon^{-(1+\sigma)}\mathcal G^{-a}(0).
\end{align*}

To prove (\ref{nODE}), we use the following argument. If $N'(t)\leq 0$ for some $t\in[0,T)$, then for such value
of $t$, we get
\begin{align}  \label{b34}
Y^\mu(t)=[\mathcal G^{1-a}(t)+\varepsilon N'(t)]^\mu\leq \mathcal G(t).
\end{align}
Then we infer from \eqref{b31} and \eqref{b34} that
$$
Y'(t)\geq 3\varepsilon \mathcal G(t)\geq 3\varepsilon^{1+\sigma}\mathcal G(t)\geq
3\varepsilon^{1+\sigma}Y^{\mu}(t),
$$
for any value of $t$ such that $N'(t)\leq 0$. 

If $N'(t)> 0$ for some $t\in[0,T)$, then
\begin{align}\label{b35}
Y^\mu(t)\leq C \Big[\mathcal G(t)+[N'(t)]^\mu\Big].
\end{align}

We know that $S(t)> \mathcal G(t)\geq \mathcal G(0)>0$ by (\ref{b27}) and (\ref{GG0}). Let $\varepsilon \leq \mathcal G(0)$.
Then, following estimates \eqref{6-27}-\eqref{6-30}, we can derive 
\begin{align}  \label{b36}
[N'(t)]^\mu \leq C(\|u_t\|_2^2 + |w_t|_2^2  + \|\nabla u\|_2^2 + \varepsilon^{-\sigma} S(t))
\leq  C \varepsilon^{-\sigma} (E(t)+S(t)).
\end{align}
Combining \eqref{b31}, \eqref{b36} and (\ref{b35}), we arrive at
\begin{align*}
Y'(t)&\geq C\varepsilon\Big[\|u_t\|^2_2+|w_t|^2_2+\mathcal G(t)+E(t)+S(t)\Big]\\
&\geq C\varepsilon\Big[\mathcal G(t)+\varepsilon^\sigma  [N'(t)]^\mu\Big] \geq C\varepsilon^{1+\sigma}\Big[\mathcal G(t)+ [N'(t)]^\mu\Big]\geq  C\varepsilon^{1+\sigma}Y^\mu(t),
\end{align*}
for any value of $t$ such that $N'(t)> 0$. As a result, we conclude that (\ref{nODE}) holds for all values of $t\in [0,T)$.

Finally, by using the same argument as in Section \ref{blowup1}, we conclude 
$\limsup_{t\to T^{-}} (\|\nabla u(t)\|_2^2+|\Delta w(t)|_2^2)=+\infty$.
This completes the proof. 
\end{proof}

\vspace{0.1 in}

\subsection{Proof of Corollary \ref{cor1}}
Corollary \ref{cor1} states that the weak solution of system (\ref{PDE}) blows up in finite time if the source terms exceed the damping terms, and the initial total energy $\mathcal E(0)$ is positive but sufficiently small, 
and the initial data are from $\mathcal W_2$, i.e., the unstable part of the potential well.

\begin{proof}[Proof of Corollary \ref{cor1}]
It suffices to show that if $(u_0,w_0)\in\mathcal{W}_2$, then $E(0)>y_0$.

Since $(u_0,w_0)\in\mathcal{W}_2$, then by the definition of $\mathcal{W}_2$, we get
$$
\|\nabla u_0\|^2_2+|\Delta w_0|^2_2+\|u_0\|^2_2<(p+1)\int_\Omega F(u_0)dx+(q+1)\int_\Gamma H(w_0)d\Gamma,
$$
which together with \eqref{3-4} implies
\begin{align}\label{z0}
\|\nabla u_0\|^2_2+|\Delta w_0|^2_2+\|u_0\|^2_2<M(p+1)\|u_0\|^{p+1}_{p+1}+M(q+1)|w_0|^{q+1}_{q+1}.
\end{align}
Let $X:=H^1_{\Gamma_0}(\Omega)\times H^2_0(\Gamma)$ and recall the definition of $K_1,K_2$ in \eqref{3-12}. Then we obtain from \eqref{z0} that
\begin{align}\label{z1}
\|(u_0,w_0)\|^2_X&<M(p+1)K_1\|\nabla u_0\|^{p+1}_{2}+ M(q+1)K_2|w_0|^{q+1}_2\nonumber\\
&\leq M(p+1)K_1\|(u_0,w_0)\|^{p+1}_{X}+ M(q+1)K_2\|(u_0,w_0)\|^{q+1}_X.
\end{align}
We divide both sides of \eqref{z1} by $\|(u_0,w_0)\|^2_X$ to reach
$$
 M(p+1)K_1\|(u_0,w_0)\|^{p-1}_{X}+ M(q+1)K_2\|(u_0,w_0)\|^{q-1}_X>1.
$$
This along with \eqref{F1} gives 
\begin{align*}
& MK_1(p+1)\left(\|(u_0,w_0)\|^{2}_{X}\right)^{\frac{p-1}{2}}+ MK_2(q+1)\left(\|(u_0,w_0)\|^{2}_X\right)^{\frac{q-1}{2}}\\
&\quad>1=MK_1(p+1)(2y_0)^{\frac{p-1}{2}}+MK_2(q+1)(2y_0)^{\frac{q-1}{2}}.
\end{align*}
Since $p,q>1$, then we have
\begin{align}\label{z2}
\|(u_0,w_0)\|^{2}_{X}>2y_0,
\end{align}
which implies that
$E(0)>y_0$. Then, using Theorem \ref{thm6-2}, we obtain the blow-up of weak solutions in finite time. 
\end{proof}

\vspace{0.1 in}

\section{Appendix}

\subsection{Proof of Lemma \ref{lem3-2-1}}

\begin{proof} [Proof of Lemma \ref{lem3-2-1}]
Because of energy equality (\ref{energy-2}) and (\ref{3-9}),
we have
\begin{align}\label{w2-2}
\mathcal{J}(u,w)\leq\mathcal{E}(t)\leq\mathcal{E}(0)<d,\ \ \mbox{for}\  t\in[0,T).
\end{align}
Then  $(u(t),w(t))\in\mathcal{W}$. 
To prove $(u(t),w(t))\in\mathcal{W}_2$, we argue by contradiction.
We assume that there exists $t_1\in(0,T)$ such that
$(u(t_1),w(t_1))\notin\mathcal{W}_2$. Recalling
$\mathcal{W}_1\cup\mathcal{W}_2=\mathcal{W}$ and
$\mathcal{W}_1\cap\mathcal{W}_2=\emptyset$, then we obtain that
$(u(t_1),w(t_1))\in\mathcal{W}_1$.

By \eqref{3-6} and the mean value theorem, we can get that for any
$t_0\in [0,T)$,
\begin{align}
\int_\Omega|F(u(t))-F(u(t_0))|dx&\leq C\int_\Omega
(|u(t)|^p+|u(t_0)|^p)|u(t)-u(t_0)|dx\nonumber\\
&\leq C(\|u(t)\|^p_{p+1}+\|u(t_0)\|^p_{p+1})\|u(t)-u(t_0)\|_{p+1}.\nonumber
\end{align}
Noting $p\leq 5$, and using the embedding
$H^1_{\Gamma_0}(\Omega)\hookrightarrow L^6(\Omega)$ and the
regularity of the weak solution $u\in
C([0,T);H^1_{\Gamma_0}(\Omega))$, we conclude that
$\int_\Omega F(u(t)) dx\to \int_\Omega F(u(t_0)) dx$ as $t\to
t_0$, which implies that the function $t\mapsto\int_\Omega F(u(t))dx$ is
continuous on $[0,T)$.
Similarly, the continuity of  the function $t\mapsto\int_\Gamma
H(w(t))d\Gamma$ is obtained on $[0,T)$.

Since  $(u(0),w(0))\in\mathcal{W}_2$ and
$(u(t_1),w(t_1))\in\mathcal{W}_1$, then by the continuity and the intermediate value theorem, we know that there exists
$s\in (0,t_1]$ such that
\begin{align}\label{w2-3}
\|\nabla u(s)\|^2_2+|\Delta w(s)|^2_2+\|u(s)\|^2_2=(p+1)\int_\Omega
F(u(s))dx+(q+1)\int_\Gamma H(w(s))d\Gamma.
\end{align}
Define $t^*$ be the infinimum point over $s\in(0,t_1]$ satisfying
\eqref{w2-3}. Then $t^*\in(0,t_1]$ and $(u(t),w(t))\in\mathcal{W}_2$
for
any $t\in [0,t^*)$. Two cases are considered as follows:\\
\emph{Case 1.}  $(u(t^*),w(t^*))\neq(0,0)$. Since \eqref{w2-3} holds
for $t^*$, then $(u(t^*),w(t^*))\in\mathcal{N}$. We can get from
\eqref{3-11} that $\mathcal{J}(u(t^*),w(t^*))\geq d$. Since
$\mathcal{E}(t)\geq\mathcal{J}(u(t),w(t))$ for any $t\in[0,T)$,
$\mathcal{E}(t^*)\geq d$ is obtained. This contradicts \eqref{w2-2}.\\
\emph{Case 2.} $(u(t^*),w(t^*))=(0,0)$. Note that
$(u(t),w(t))\in\mathcal{W}_2$ for any $t\in[0,t^*)$. We conclude
from \eqref{3-4} that for any $t\in[0,t^*)$,
\begin{align}
\|\nabla u(t)\|^2_2+|\Delta w(t)|^2_2+\|u(t)\|^2_2\leq
C(\|u(t)\|^{p+1}_{p+1}+|w(t)|^{q+1}_{q+1})\leq C(\|\nabla
u(t)\|^{p+1}_2+|\Delta w(t)|^{q+1}_2),\nonumber
\end{align}
which implies
$$
\|(u(t),w(t))\|^2_X<C(\|(u(t),w(t))\|^{p+1}_X+\|(u(t),w(t))\|^{q+1}_X),\
\ t\in [0,t^*),
$$
where $X=H^1_{\Gamma_0}(\Omega)\times H^2_0(\Gamma)$. Then, for any $t\in [0,t^*)$, we see that
$$
\|(u(t),w(t))\|^{p-1}_X+\|(u(t),w(t))\|^{q-1}_X>\frac{1}{C}.
$$
This gives us $\|(u(t),w(t))\|_X>s_0$, for any $t\in [0,t^*)$,
where $s_0>0$ is the unique positive solution of
$s^{p-1}+s^{q-1}=\frac{1}{C}$, where $p,q>1$. It follows from the
continuity of the weak solution $(u(t),w(t))$ that
$\|(u(t^*),w(t^*))\|_X\geq s_0>0$. This contradicts that $(u(t^*),w(t^*))=(0,0)$. Therefore, $(u(t),w(t))\in \mathcal{W}_2$ for all $t\in [0,T)$.
\end{proof}

\subsection{Proof of inequality (\ref{dhd})}
\begin{proof} [Proof of (\ref{dhd})]

We justify that $0<\hat d\leq d$. Indeed, by using (\ref{def-dh}) and (\ref{F1}), we have
\begin{align*}
\hat{d}&=y_0-MK_1(2y_0)^{\frac{p+1}{2}}-MK_2(2y_0)^{\frac{q+1}{2}}\\
&=y_0-\frac{2y_0}{p+1}\cdot MK_1(p+1)(2y_0)^{\frac{p-1}{2}}-
\frac{2y_0}{q+1}\cdot MK_2(q+1)(2y_0)^{\frac{q-1}{2}}\\
&\geq y_0-\max\left\{\frac{2y_0}{p+1},\frac{2y_0}{q+1}\right\}\left[MK_1(p+1)(2y_0)^{\frac{p-1}{2}}+MK_2(q+1)(2y_0)^{\frac{q-1}{2}}\right]\\
&=y_0-\max\left\{\frac{2y_0}{p+1},\frac{2y_0}{q+1}\right\}=y_0\cdot\min\left\{\frac{p-1}{p+1},\frac{q-1}{q+1}\right\},
\end{align*}
which, using the fact $p,q>1$, implies $\hat{d}>0$. 

Let $X=H^1_{\Gamma_0}(\Omega)\times H^2_0(\Gamma)$. It follows  from \eqref{3-4}, \eqref{3-8} and \eqref{3-12} that
\begin{align}\label{b5}
\mathcal{J}(u,w)&\geq\frac{1}{2}(\|\nabla u\|^2_2+\|u\|_2^2 +  |\Delta
w|^2_2)-M(\|u\|^{p+1}_{p+1}+|w|^{q+1}_{q+1})\nonumber\\
&\geq \frac{1}{2}(\|\nabla u\|^2_2 + \|u\|_2^2    +|\Delta w|^2_2)-MK_1\|\nabla
u\|^{p+1}_2-MK_2|\Delta w|^{q+1}_2\nonumber\\
&\geq \frac{1}{2}\|(u,w)\|^2_X-MK_1\|(u,w)\|^{p+1}_X-MK_2\|(u,w)\|^{q+1}_X\nonumber\\
&:=\Lambda(\|(u,w)\|_X),
\end{align}
with 
$$
\Lambda(y)=\frac{1}{2}y^2-MK_1y^{p+1}-MK_2y^{q+1}.
$$
Since $p,q>1$, then
$$
\Lambda'(y)=y[1-MK_1(p+1)y^{p-1}-MK_2(q+1)y^{q-1}],
$$
has only one positive zero at $y^*$, where $y^*$ satisfies
\begin{align}\label{b5-1}
 MK_1(p+1)(y^*)^{p-1}+MK_2(q+1)(y^*)^{q-1}=1.
\end{align}
It is easy to verify that $\Lambda(y)$ has maximum value at
$y=y^*$, i.e.,
\begin{align*}
\Lambda(y^*)=\sup_{[0,\infty)}\Lambda(y)=\frac{1}{2}(y^*)^2-MK_1(y^*)^{p+1}-MK_2(y^*)^{q+1}.
\end{align*}
It follows from \eqref{F1} and \eqref{b5-1} that 
$(y^*)^2=2y_0$. 
Therefore,
\begin{align} \label{b6}
\Lambda(y^*)&= y_0-MK_1(2y_0)^{\frac{p+1}{2}}-MK_2(2y_0)^{\frac{q+1}{2}}=\hat{d}.
\end{align}

From \eqref{b5}, we obtain
$$
\mathcal{J}(\lambda (u,w))\geq \Lambda(\lambda\|(u,w)\|_X),\ \mbox{for}\ \mbox{all}\ \lambda\geq0.
$$
It follows that
$$
\sup_{\lambda\geq0}\mathcal{J}(\lambda (u,w))\geq \Lambda(y^*).
$$
Then we infer from \eqref{3-11} and \eqref{b6} that
$$
d=\inf_{(u,w)\in X\backslash(0,0)}\sup_{\lambda\geq0}\mathcal{J}(\lambda (u,w))\geq \Lambda(y^*)=\hat{d}.
$$
This shows that $\hat{d}$ is not larger than the depth $d$ of the potential well.
\end{proof}

\bibliographystyle{abbrv}

\begin{thebibliography}{10}

\bibitem{ACCRT}
C.~O. Alves, M.~M. Cavalcanti, V.~N. Domingos~Cavalcanti, M.~A. Rammaha, and
  D.~Toundykov.
\newblock On existence, uniform decay rates and blow up for solutions of
  systems of nonlinear wave equations with damping and source terms.
\newblock {\em Discrete Contin. Dyn. Syst. Ser. S}, 2(3):583--608, 2009.

\bibitem{Avalos2}
G.~Avalos.
\newblock Wellposedness of a structural acoustics model with point control.
\newblock In {\em Differential geometric methods in the control of partial
  differential equations ({B}oulder, {CO}, 1999)}, volume 268 of {\em Contemp.
  Math.}, pages 1--22. Amer. Math. Soc., Providence, RI, 2000.

\bibitem{Avalos1}
G.~Avalos and I.~Lasiecka.
\newblock Uniform decay rates for solutions to a structural acoustics model
  with nonlinear dissipation.
\newblock {\em Appl. Math. Comput. Sci.}, 8(2):287--312, 1998.

\bibitem{Avalos3}
G.~Avalos and I.~Lasiecka.
\newblock Exact controllability of structural acoustic interactions.
\newblock {\em J. Math. Pures Appl. (9)}, 82(8):1047--1073, 2003.

\bibitem{Avalos4}
G.~Avalos and I.~Lasiecka.
\newblock Exact controllability of finite energy states for an acoustic
  wave/plate interaction under the influence of boundary and localized
  controls.
\newblock {\em Adv. Differential Equations}, 10(8):901--930, 2005.

\bibitem{Beale76}
J.~T. Beale.
\newblock Spectral properties of an acoustic boundary condition.
\newblock {\em Indiana Univ. Math. J.}, 25(9):895--917, 1976.

\bibitem{Becklin-Rammaha2}
A.~R. Becklin and M.~A. Rammaha.
\newblock Hadamard well-posedness for a structure acoustic model with a
  supercritical source and damping terms.
\newblock {\em Evol. Equ. Control Theory}, 10(4):797--836, 2021.

\bibitem{BL3}
L.~Bociu and I.~Lasiecka.
\newblock Blow-up of weak solutions for the semilinear wave equations with
  nonlinear boundary and interior sources and damping.
\newblock {\em Appl. Math. (Warsaw)}, 35(3):281--304, 2008.

\bibitem{BL2}
L.~Bociu and I.~Lasiecka.
\newblock Uniqueness of weak solutions for the semilinear wave equations with
  supercritical boundary/interior sources and damping.
\newblock {\em Discrete Contin. Dyn. Syst.}, 22(4):835--860, 2008.

\bibitem{BL1}
L.~Bociu and I.~Lasiecka.
\newblock Local {H}adamard well-posedness for nonlinear wave equations with
  supercritical sources and damping.
\newblock {\em J. Differential Equations}, 249(3):654--683, 2010.

\bibitem{Cagnol1}
J.~Cagnol, I.~Lasiecka, C.~Lebiedzik, and J.-P. Zol\'{e}sio.
\newblock Uniform stability in structural acoustic models with flexible curved
  walls.
\newblock {\em J. Differential Equations}, 186(1):88--121, 2002.

\bibitem{Feng-Guo-Rammaha1}
B.~Feng, Y.~Guo, and M.~A. Rammaha.
\newblock On the asymptotic behavior of solutions to a structure acoustics
  model.
\newblock {\em arXiv:2202.09656v1}, 2022.

\bibitem{GT}
V.~Georgiev and G.~Todorova.
\newblock Existence of a solution of the wave equation with nonlinear damping
  and source terms.
\newblock {\em J. Differential Equations}, 109(2):295--308, 1994.

\bibitem{MG1}
M.~Grobbelaar-Van~Dalsen.
\newblock On a structural acoustic model with interface a {R}eissner-{M}indlin
  plate or a {T}imoshenko beam.
\newblock {\em J. Math. Anal. Appl.}, 320(1):121--144, 2006.

\bibitem{MG3}
M.~Grobbelaar-Van~Dalsen.
\newblock On a structural acoustic model which incorporates shear and thermal
  effects in the structural component.
\newblock {\em J. Math. Anal. Appl.}, 341(2):1253--1270, 2008.

\bibitem{Guo}
Y.~Guo.
\newblock Global well-posedness for nonlinear wave equations with supercritical
  source and damping terms.
\newblock {\em J. Math. Anal. Appl.}, 477(2):1087--1113, 2019.

\bibitem{GR1}
Y.~Guo and M.~A. Rammaha.
\newblock Blow-up of solutions to systems of nonlinear wave equations with
  supercritical sources.
\newblock {\em Appl. Anal.}, 92(6):1101--1115, 2013.

\bibitem{GR2}
Y.~Guo and M.~A. Rammaha.
\newblock Global existence and decay of energy to systems of wave equations
  with damping and supercritical sources.
\newblock {\em Z. Angew. Math. Phys.}, 64(3):621--658, 2013.

\bibitem{GR}
Y.~Guo and M.~A. Rammaha.
\newblock Systems of nonlinear wave equations with damping and supercritical
  boundary and interior sources.
\newblock {\em Trans. Amer. Math. Soc.}, 366(5):2265--2325, 2014.

\bibitem{GRS2}
Y.~Guo, M.~A. Rammaha, and S.~Sakuntasathien.
\newblock Blow-up of a hyperbolic equation of viscoelasticity with
  supercritical nonlinearities.
\newblock {\em J. Differential Equations}, 262(3):1956--1979, 2017.

\bibitem{Howe1998}
M.~S. Howe.
\newblock {\em Acoustics of fluid-structure interactions}.
\newblock Cambridge Monographs on Mechanics. Cambridge University Press,
  Cambridge, 1998.

\bibitem{LAS1999}
I.~Lasiecka.
\newblock Boundary stabilization of a 3-dimensional structural acoustic model.
\newblock {\em J. Math. Pures Appl. (9)}, 78(2):203--232, 1999.

\bibitem{Las2002}
I.~Lasiecka.
\newblock {\em Mathematical control theory of coupled {PDE}s}, volume~75 of
  {\em CBMS-NSF Regional Conference Series in Applied Mathematics}.
\newblock Society for Industrial and Applied Mathematics (SIAM), Philadelphia,
  PA, 2002.

\bibitem{V4}
E.~Vitillaro.
\newblock Global nonexistence theorems for a class of evolution equations with
  dissipation.
\newblock {\em Arch. Ration. Mech. Anal.}, 149(2):155--182, 1999.

\end{thebibliography}
\def\cprime{$'$}

\end{document}